\tikzstyle{vertex}=[circle,draw=black,fill=black,inner sep=0,minimum size=3pt,text=white,font=\footnotesize]
\newtheorem{thm}{Theorem}[section]
\newtheorem{prop}[thm]{Proposition}
\newtheorem{corollary}[thm]{Corollary}
\newtheorem{clm}[thm]{Claim}
\newtheorem*{lemma*}{Lemma}
\newtheorem*{proposition*}{Proposition}
\newtheorem*{theorem*}{Theorem}
\newcommand\ex{\ensuremath{\mathrm{ex}}}
\newcommand\cB{{\mathcal B}}
\newcommand\cG{{\mathcal G}}
\newcommand\cH{{\mathcal H}}
\newcommand\cM{{\mathcal M}}
\newcommand\cN{{\mathcal N}}
\newcommand{\ignore}[1]{}
\title{Generalized Tur\'an problems for complete bipartite graphs}
\author{D\'aniel Gerbner$^1$, Bal\'azs Patk\'os$^{1,2}$\\ \small $^1$ Alfr\'ed R\'enyi Institute of Mathematics\\ \small $^2$ Moscow Institute of Physics and Technology}
\date{}
\begin{document}

\maketitle

\begin{abstract}
    For graph $H$, $F$ and integer $n$, the generalized Tur\'an number $ex(n,H,F)$ denotes the maximum number of copies of $H$ that an $F$-free $n$-vertex graph can have. We study this parameter when both $H$ and $F$ are complete bipartite graphs.
\end{abstract}

\section{Introduction}

The most basic question in extremal graph theory is the following: given a forbidden graph $F$, how many edges can an $n$-vertex graph have, if it does not contain $F$ as a subgraph? A natural generalization is if instead of the number of edges, we are interested in the number of copies of another graph $H$. After several sporadic results (see e.g. \cite{gyps,zykov}), the systematic study of this variant (usually referred to as generalized Tur\'an problem) was initiated by Alon and Shikhelman \cite{as}. Since then, it has attracted the attention of several researchers, see e.g. \cite{bmrsz,gerbner2,ggymv,GP2017,myz}.

We denote by $\ex(n,F)$ the largest number of edges that an $n$-vertex $F$-free graph can have, the ordinary Tur\'an number. For two graphs $H$ and $G$, let $\cN(H,G)$ denote the number of (not necessarily induced) subgraphs of $G$ that are isomorphic to $H$, in other words the number of (unlabeled) copies of $H$ in $G$. Let $\ex(n,H,F):=\max\{\cN(H,G): \text{$G$ is an $n$-vertex $F$-free graph}\}$, i.e. $\ex(n,H,F)$ is the largest number of copies of $H$ that an $n$-vertex $F$-free graph can have.

In this paper we study the generalized Tur\'an problem when both graphs $F$ and $H$ are complete bipartite. 
A number of papers have already studied $\ex(n,K_{a,b},K_{s,t})$ for different values of the parameters. Let us start with recalling what is known about the ordinary Tur\'an number $\ex(n,K_{s,t})=\ex(n,K_{1,1},K_{s,t})$. We will assume without loss of generality that $s\le t$.

In the case $s=1$, the problem is trivial. Let us say that a graph $G$ on $n$ vertices is almost $d$-regular, if either every vertex has degree $d$, or $n-1$ vertices have degree $d$ and one vertex has degree $d-1$. It is easy to see that $\ex(n,K_{1,t})=\lfloor (t-1)n/2\rfloor$, with equality for any almost $(t-1)$-regular graph. 

In the case $s=2$, the problem is much more complicated, and exact results are known only for some values of $n$ in the case $t=2$. However, the asymptotics are known: $\ex(n,K_{2,t})=\frac{1}{2}\sqrt{t-1}n^{3/2}+O(n^{4/3})$. The upper bound comes from the general upper bound by K\H ov\'ari, T. S\'os and Tur\'an \cite{kst}, that we will discuss shortly. The lower bound is by a construction of F\"uredi \cite{fur}. It is an algebraic construction that we will not describe in detail, but we will use it later. We will only use one of its well-known properties (besides it being $K_{2,t}$-free): all but $o(n^2)$ pairs of vertices have exactly $t-1$ common neighbors.

Brown \cite{bro} constructed a $K_{3,3}$-free graph with $(1/2+o(1))n^{5/3}$ edges. F\"uredi \cite{fur3} showed that this is the correct asymptotics. 

K\H ov\'ari, T. S\'os and Tur\'an \cite{kst} showed that $\ex(n,K_{s,t})=O(n^{2-1/s})$. It is generally conjectured to be the correct order of magnitude. Besides the above mentioned cases, it is only known if $t\ge (s-1)!+1$. This was shown in \cite{krsz,arsz}, using the so-called projective norm graphs. Another construction in the case $t$ is even larger compared to $s$, is due to Bukh \cite{bukh}, using random polynomials.

The order of magnitude of $\ex(n,K_{a,b},K_{s,t})$ was studied by Alon and Shikhelman \cite{as} in the case $a\le s\le t$ and $a\le b<t$. 
Their results were extended by Ma, Yuan and Zhang \cite{myz} in the case $a<s$, $b\le s$ and $t$ is large enough and by Bayer, M\'esz\'aros, R\'onyai and Szab\'o \cite{bmrsz} in the case $a\le 3$, $s\ge 4$, $a\le b<t$ and $t$ is large enough. Gerbner, Methuku and Vizer \cite{gmv} studied the case $a,s\le b\le t$. Gerbner \cite{gerbner2} determined $\ex(n,K_{1,b},K_{2,2})$ exactly.

We state a simple upper bound from \cite{gmv} that we use later multiple times.

\begin{prop}[Gerbner, Methuku, Vizer \cite{gmv}]\label{gemevi}
If $s\le a\le b\le t$, then $\ex(n,K_{a,b},K_{s,t})=O(n^s)$.
\end{prop}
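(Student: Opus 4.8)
The plan is to run a double-counting argument powered by a single structural consequence of $K_{s,t}$-freeness: writing $N(S)=\bigcap_{v\in S}N(v)$ for the common neighbourhood of a vertex set $S$, every $s$-element set $S$ satisfies $|N(S)|\le t-1$, since $s$ vertices with $t$ common neighbours would span a $K_{s,t}$. The leverage is that once an $s$-element ``seed'' of a copy of $K_{a,b}$ is fixed, the rest of the copy is trapped inside sets of bounded size, so each seed extends to only a constant number of copies.

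Concretely, fix a $K_{s,t}$-free graph $G$ on $n$ vertices and set $\cN=\cN(K_{a,b},G)$. I would count pairs $(C,S)$ where $C$ is a copy of $K_{a,b}$ with parts $A$ (of size $a$) and $B$ (of size $b$), and $S\subseteq A$ with $|S|=s$; forming such an $S$ uses the hypothesis $s\le a$. Counting by $C$ first, each copy offers $\binom{a}{s}$ choices of $S$ after fixing an admissible bipartition, so the number of pairs is at least $\binom{a}{s}\cN$ (I phrase this as an inequality rather than an equality to absorb the harmless ambiguity that, when $a=b$, a copy admits two bipartitions, whereas for $a\ne b$ the $a$-side is uniquely the set of degree-$b$ vertices of $C$).

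Counting by $S$ first gives the upper bound. There are $\binom{n}{s}=O(n^s)$ choices for $S$. Having fixed $S$, the side $B$ must lie in $N(S)$, so there are at most $\binom{t-1}{b}$ choices for $B$. Since $b\ge a\ge s$, the set $B$ itself contains an $s$-subset $S'$, whence $N(B)\subseteq N(S')$ and $|N(B)|\le t-1$; as $A\setminus S\subseteq N(B)$, there are at most $\binom{t-1}{a-s}$ choices for $A\setminus S$. Thus each seed $S$ extends to at most $\binom{t-1}{b}\binom{t-1}{a-s}$ copies, and comparing the two counts yields
\[
\binom{a}{s}\,\cN \;\le\; \binom{n}{s}\binom{t-1}{b}\binom{t-1}{a-s},
\]
so $\cN\le \big(\binom{t-1}{b}\binom{t-1}{a-s}/\binom{a}{s}\big)\binom{n}{s}=O(n^s)$, as claimed.

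I do not expect a serious obstacle here; the delicate points are purely bookkeeping. The first is the bipartition ambiguity just noted, handled by using an inequality. The second is checking that the two-step localisation really certifies a full copy: $B\subseteq N(S)$ supplies all $S$--$B$ edges and $A\setminus S\subseteq N(B)$ supplies all $(A\setminus S)$--$B$ edges, so every $A$--$B$ edge is present. Finally, the boundary case $b=t$ is absorbed automatically, since then $\binom{t-1}{b}=0$ forces $\cN=0$, consistent with the fact that a $K_{a,t}$ with $a\ge s$ already contains $K_{s,t}$.
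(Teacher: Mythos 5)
Your proof is correct and follows essentially the same route as the paper: fix an $s$-element seed in $O(n^s)$ ways, use $K_{s,t}$-freeness twice to bound each common neighbourhood by $t-1$ so the seed extends to only constantly many copies, and divide out the counting multiplicity. The only (immaterial) difference is that you place the seed inside the $a$-side and recover $A\setminus S$ last, whereas the paper seeds an $s$-subset of the $b$-side, picks the $a$-side from $N(S)$, and completes the $b$-side last, giving the exact multiplicity $\binom{b}{s}$ instead of your inequality.
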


For sake of completeness, we sketch the proof. First, we can pick $s$ vertices in $\binom{n}{s}$ ways. Those have at most $t-1$ common neighbors, we pick $a$ of them in at most $\binom{t-1}{a}$ ways. Those $a$ vertices have at most $t-1$ common neighbors, we have already picked $s$ of them, now we pick $b-s$ more in at most $\binom{t-1-s}{b-s}$ ways. We counted every copy of $K_{a,b}$ exactly $\binom{b}{s}$ ways.

\vskip 0.1truecm

The authors of \cite{myz} actually proved a very general result (in fact they proved a hypergraph version of this).

\begin{thm}[Ma, Yuan, Zhang \cite{myz}]\label{kinai}
For any graph $H$, if $t$ is large enough compared to $s$ and $H$, then $\ex(n,H,K_{s,t})=\Omega(n^{|V(H)|-\frac{|E(H)|}{s}})$.
\end{thm}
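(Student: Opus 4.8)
The plan is to adapt Bukh's random algebraic construction (the one producing $K_{s,t}$-free graphs with $\Theta(n^{2-1/s})$ edges) so that it also carries many copies of $H$. Write $v=|V(H)|$ and $e=|E(H)|$. Fix a prime power $q$ with $q^s$ close to $n$, take the vertex set to be $\mathbb{F}_q^s$, and pick a uniformly random symmetric polynomial $f\colon \mathbb{F}_q^s\times\mathbb{F}_q^s\to\mathbb{F}_q$ of total degree at most $D$, joining $x$ to $y$ exactly when $f(x,y)=0$. The constant $D$ is chosen large enough in terms of $e$ (to guarantee the independence used below) yet small enough that $D^s<t$; such a $D$ exists precisely because $t$ is large compared with $s$ and $H$. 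The resulting graph has edge density $\Theta(1/q)=\Theta(n^{-1/s})$, matching the density of the extremal $K_{s,t}$-free graphs.

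First I would compute $\mathbb{E}[\cN(H,G)]$. A copy of $H$ is an injection $\phi\colon V(H)\to\mathbb{F}_q^s$ with $f(\phi(u),\phi(w))=0$ for every edge $uw$. The key fact is that for a random polynomial of degree at least (roughly) $e$, its values at any $e$ distinct unordered pairs of points are jointly uniform over $\mathbb{F}_q^e$; since $\phi$ is injective the $e$ edge-pairs are distinct, so the $e$ defining equations hold independently, each with probability $1/q$. Hence $\mathbb{E}[\cN(H,G)]=(1+o(1))\,q^{sv}q^{-e}=\Theta(n^{v-e/s})$, which is already the target order of magnitude.

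Next I would argue near-$K_{s,t}$-freeness. For fixed $x_1,\dots,x_s$ the common neighbourhood is the variety $\{y:f(x_i,y)=0,\ i\le s\}$ cut out by $s$ equations in $s$-dimensional space; whenever this variety is zero-dimensional, B\'ezout's theorem bounds it by $D^s<t$, so the $s$-set cannot be the small side of a $K_{s,t}$. The hard part will be the degenerate $s$-sets whose variety is positive-dimensional, where this bound fails and arbitrarily many common neighbours are possible. Following Bukh's analysis I would bound the expected number of such degenerate configurations, and more importantly the expected number of copies of $H$ meeting one of them, and show both are of strictly smaller order than $n^{v-e/s}$. Deleting one vertex from each bad configuration then makes the graph genuinely $K_{s,t}$-free while destroying only $o(n^{v-e/s})$ copies of $H$; since the expectation of the surviving count is still $\Theta(n^{v-e/s})$, some realization of $f$ gives a $K_{s,t}$-free graph with $\Omega(n^{v-e/s})$ copies of $H$, as required.

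I expect this last step --- isolating the algebraically special $s$-sets from the generic ones controlled by B\'ezout, and showing they are too sparse to spoil either the count of $H$ or the deletion --- to be the main obstacle, exactly as in the random algebraic method for ordinary Tur\'an numbers; it is also the place where the assumption that $t$ be large relative to $s$ and $H$ is essential, since it is what leaves room to take $D\ge e$ while keeping $D^s<t$.
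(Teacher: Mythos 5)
This statement is not proved in the paper at all: it is quoted verbatim as Theorem~\ref{kinai}, a black-box citation of Ma, Yuan and Zhang \cite{myz}, so there is no internal proof to compare against. Your sketch is essentially the method of that cited paper (and of Bukh's random algebraic construction on which it builds): random polynomial of bounded degree on $\mathbb{F}_q^s$, joint uniformity of evaluations giving $\mathbb{E}[\cN(H,G)]=\Theta(n^{|V(H)|-|E(H)|/s})$, a dichotomy for common-neighbourhood varieties (bounded size versus positive-dimensional), and deletion of the degenerate configurations. One caution on the step you defer: Bukh's original analysis only bounds the expected \emph{number} of degenerate $s$-sets, which does not suffice here, since a single deleted vertex can lie in as many as $n^{|V(H)|-1}$ copies of $H$, and $n^{|V(H)|-1}$ need not be $o(n^{|V(H)|-|E(H)|/s})$ once $|E(H)|\ge s$; you correctly identify that one must instead bound the expected number of copies of $H$ meeting a degenerate configuration (exploiting that the degeneracy probability is $q^{-\Omega(D)}$, hence beatable by taking $D$ large, which in turn forces $t$ large), and carrying out exactly this estimate is the substantive new content of the Ma--Yuan--Zhang argument beyond Bukh.
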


Let us mention one more general lower bound obtained via a random construction that we will refer to several times in the paper.

\begin{thm}[Gerbner, Palmer \cite{GP2017}]\label{randomconstr}
If for two graphs $F$ and $H$ the inequality $|E(F)|>|E(H)|$ holds, then we have  $\ex(n,H,F)=\Omega(n^{|V(H)|-\frac{|E(H)|(|V(F)|-2}{|E(F)|-|E(H)|}})$.
\end{thm}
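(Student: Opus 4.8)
The plan is to use the probabilistic deletion (alteration) method on the binomial random graph $G(n,p)$ with edge probability $p = c\,n^{-(|V(F)|-2)/(|E(F)|-|E(H)|)}$ for a suitably small constant $c>0$; note that the hypothesis $|E(F)|>|E(H)|$ is exactly what makes this exponent well defined and (since $|V(F)|\ge 3$) negative, so that $p\to 0$. Writing $X$ for the number of copies of $H$ in $G(n,p)$, linearity of expectation gives $\mathbb{E}[X]=\Theta\!\left(n^{|V(H)|}p^{|E(H)|}\right)=\Theta\!\left(n^{|V(H)|-\frac{|E(H)|(|V(F)|-2)}{|E(F)|-|E(H)|}}\right)$, which is exactly the target order of magnitude. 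The idea is then to destroy every copy of $F$ by deleting a single edge from each of them, obtaining an $F$-free graph $G'$, and to argue that only a small fraction of the copies of $H$ are lost in the process.

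First I would delete, for each copy of $F$ in $G(n,p)$, one of its edges; the resulting graph $G'$ is $F$-free by construction. A copy of $H$ is destroyed only if one of the deleted edges lies in it, and in that case this copy of $H$ shares an edge with the copy of $F$ from which that edge was removed. Hence the number of destroyed copies of $H$ is at most the number of pairs $(h,f)$, where $h$ is a copy of $H$, $f$ is a copy of $F$, and $h$ and $f$ share at least one edge. I would bound the expectation of this quantity by summing over the possible intersection patterns: grouping the pairs according to the isomorphism type of the common subgraph $S=h\cap f$, the expected number of pairs with a given $S$ is $O\!\left(n^{|V(H)|+|V(F)|-|V(S)|}\,p^{|E(H)|+|E(F)|-|E(S)|}\right)$, since the union $h\cup f$ spans $|V(H)|+|V(F)|-|V(S)|$ vertices and contains $|E(H)|+|E(F)|-|E(S)|$ edges.

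The heart of the argument is a single elementary inequality that explains the exponent in the statement. For every common subgraph $S$ of $H$ and $F$ containing at least one edge we have $|V(S)|\ge 2$ and $|E(S)|\le |E(H)|<|E(F)|$, and consequently
\[
\frac{|V(F)|-|V(S)|}{|E(F)|-|E(S)|}\ \le\ \frac{|V(F)|-2}{|E(F)|-|E(H)|}.
\]
With our choice of $p=c\,n^{-(|V(F)|-2)/(|E(F)|-|E(H)|)}$ this inequality guarantees, for each such $S$, that the ratio of the corresponding expected number of edge-sharing pairs to $\mathbb{E}[X]$ is $O\!\left(c^{|E(F)|-|E(S)|}\right)=O\!\left(c^{|E(F)|-|E(H)|}\right)$. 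Since there are only $O(1)$ isomorphism types $S$ to consider, choosing $c$ small enough makes the expected number of destroyed copies of $H$ at most $\tfrac12\mathbb{E}[X]$. Therefore $\mathbb{E}\big[\,|\{\text{copies of }H\text{ in }G'\}|\,\big]\ge \tfrac12\mathbb{E}[X]=\Omega\!\left(n^{|V(H)|-\frac{|E(H)|(|V(F)|-2)}{|E(F)|-|E(H)|}}\right)$, and fixing an outcome attaining at least the expectation yields an $F$-free graph on $n$ vertices with the desired number of copies of $H$.

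The main obstacle, and the reason the exponent takes this particular form rather than the sharper value one might extract from a single fixed overlap, is precisely the need to control all intersection patterns $S$ simultaneously and uniformly in $H$ and $F$: the choice of $p$ must dominate the constraint coming from the worst-case overlap, which by the displayed inequality is the one minimizing $|V(S)|$ and maximizing $|E(S)|$. A secondary technical point is the management of the multiplicative constants (the number of overlap types together with the automorphism factors), which is handled by keeping the free constant $c$ in $p$; this also absorbs the borderline case $|E(H)|=1$, where the worst overlap term is genuinely of the same order as $\mathbb{E}[X]$ and a small $c$ is indispensable.
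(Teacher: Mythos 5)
Your proof is correct, but there is nothing in this paper to check it against: Theorem \ref{randomconstr} is stated as a quoted result, with its proof in \cite{GP2017} (incidentally, the exponent in the statement has a typographical error, a missing closing parenthesis, which you interpreted correctly). Your route is the deletion method with a full intersection-pattern analysis, and every step checks out: the key inequality $\frac{|V(F)|-|V(S)|}{|E(F)|-|E(S)|}\le\frac{|V(F)|-2}{|E(F)|-|E(H)|}$ holds because numerator and denominator on the left are separately dominated (using $|V(S)|\ge 2$, $|E(S)|\le|E(H)|<|E(F)|$, and nonnegativity), and since $|E(F)|-|E(S)|\ge|E(F)|-|E(H)|\ge 1$, each of the finitely many overlap types contributes $O\bigl(c^{|E(F)|-|E(H)|}\bigr)\,\mathbb{E}[\cN(H,G(n,p))]$ to the expected loss, so a small constant $c$ finishes. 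The argument in \cite{GP2017} is the same deletion scheme with much lighter bookkeeping: after removing one edge from each copy of $F$, one invokes the deterministic bound that a fixed edge lies in $O(n^{|V(H)|-2})$ copies of $H$, so the expected number of destroyed copies is at most $O(n^{|V(H)|-2})\cdot\mathbb{E}[\cN(F,G(n,p))]=O\bigl(n^{|V(H)|+|V(F)|-2}p^{|E(F)|}\bigr)=O\bigl(c^{|E(F)|-|E(H)|}\bigr)n^{|V(H)|}p^{|E(H)|}$, with no enumeration of common subgraphs at all. What your heavier analysis buys is refinement potential: the constraint it places on $p$ is governed by $\max_S\frac{|V(F)|-|V(S)|}{|E(F)|-|E(S)|}$ taken over the common subgraphs $S$ of $H$ and $F$ actually realizable, which for particular pairs (e.g.\ when $H$ and $F$ share only small or sparse subgraphs) is strictly smaller than $\frac{|V(F)|-2}{|E(F)|-|E(H)|}$ and then yields a polynomially better lower bound than the theorem claims; the crude per-edge bound can never beat the stated exponent.
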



We will also use a result of Zhang and Ge \cite{zg}.

\begin{thm}[Zhang, Ge \cite{zg}]\label{neww}
For any $t\ge 2m-3\ge 3$, we have $\ex(n,K_m,K_{2,t})=\Theta(n^{3/2})$.
\end{thm}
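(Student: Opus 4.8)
I would prove the two bounds separately.

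\emph{Upper bound.} Since $G$ is $K_{2,t}$-free, any two vertices have at most $t-1$ common neighbours. Every copy of $K_m$ arises by choosing one of its $\binom{m}{2}$ edges $uv$ together with a copy of $K_{m-2}$ inside $G[N(u)\cap N(v)]$, and conversely each edge $uv$ lies in at most $\binom{t-1}{m-2}$ copies of $K_m$. Double counting the incidences (edge, copy of $K_m$) gives
\[
\binom{m}{2}\,\cN(K_m,G)\;=\;\sum_{uv\in E(G)}\cN\bigl(K_{m-2},G[N(u)\cap N(v)]\bigr)\;\le\;\binom{t-1}{m-2}\,|E(G)|.
\]
By the K\H ov\'ari--T.\ S\'os--Tur\'an bound \cite{kst}, $|E(G)|\le\ex(n,K_{2,t})=O(n^{3/2})$, so $\cN(K_m,G)=O(n^{3/2})$ as $t,m$ are constants. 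This step is routine and in fact only needs $t\ge m-1$.

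\emph{Lower bound.} Here I must exhibit, for all large $n$, a $K_{2,t}$-free graph on $n$ vertices with $\Omega(n^{3/2})$ copies of $K_m$. The case $m=3$ is instructive: the Erd\H os--R\'enyi polarity graph on $n=q^2+q+1$ vertices ($q$ a prime power) is $K_{2,2}$-free, hence $K_{2,t}$-free for every $t\ge 2$, and is well known to contain $\Theta(q^3)=\Theta(n^{3/2})$ triangles. For $m\ge 4$ the difficulty is genuine: since $K_m\supseteq K_{2,m-2}$ one already needs $t\ge m-1$ for a single $K_m$ to survive, and, more seriously, one cannot simply blow up vertices of the polarity graph into cliques, because two vertices in the same blown-up part would inherit all of the (about $\sqrt n$) neighbours of the original vertex, creating a huge complete bipartite subgraph and destroying $K_{2,t}$-freeness.

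The plan is therefore to build the extremal graph as a near-linear union of $\Theta(n^{3/2})$ copies of $K_m$ placed on an explicit algebraic incidence structure (in the spirit of F\"uredi's $K_{2,t}$-free construction \cite{fur}), so that any two vertices lie in at most one common clique while the remaining ``cross-clique'' common neighbours are governed by the algebra rather than left to chance. Counting incidences shows that $\Theta(n^{3/2})$ cliques of constant size $m$ force average degree $\Theta(\sqrt n)$, so a typical pair of vertices has only $\Theta(1)$ common neighbours; the role of the hypothesis $t\ge 2m-3$, i.e. $t-1\ge 2(m-2)$, is exactly to leave room for an edge $uv$ to carry two different copies of $K_m$ whose remaining $2(m-2)$ vertices are disjoint, and to absorb a few accidental common neighbours beyond those forced inside the cliques.

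The main obstacle is precisely this uniform bound: one must certify that \emph{every} pair of vertices, not merely the typical one, has at most $t-1$ common neighbours. A purely random partial Steiner system of $m$-cliques fails, since with $\Theta(n^{3/2})$ blocks the common-neighbour counts are asymptotically Poisson with constant mean and hence exceed any fixed $t$ on a positive proportion of the $\binom{n}{2}$ pairs, far too many to repair by deleting blocks without losing a constant fraction of the cliques. Consequently the construction must be explicit, so that the common neighbourhoods can be computed exactly and shown to be bounded by $t-1$ for all pairs; carrying out this verification while retaining the clique count $\Omega(n^{3/2})$ is the heart of the argument. I would model the verification on the common-neighbour computation that proves F\"uredi's graph $K_{2,t}$-free, arranging that the $m-2$ ``internal'' neighbours of each clique edge are produced on purpose and that essentially no further common neighbours are created.
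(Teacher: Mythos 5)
This theorem is not proved in the paper at all: it is quoted verbatim from Zhang and Ge \cite{zg} and used as a black box, so there is no internal proof to compare against; your proposal has to stand on its own. Its first half does: the upper bound via double counting (each edge $uv$ lies in at most $\binom{t-1}{m-2}$ copies of $K_m$ because $|N(u)\cap N(v)|\le t-1$, and $|E(G)|=O(n^{3/2})$ by K\H ov\'ari--T.~S\'os--Tur\'an \cite{kst}) is correct, routine, and needs only $t\ge m-1$, as you say.

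The lower bound, however, is where the entire content of the Zhang--Ge theorem lives, and your proposal does not prove it. What you give for $m\ge 4$ is a research plan, not an argument: no incidence structure is specified, no family of $\Theta(n^{3/2})$ cliques is exhibited, and no codegree bound is verified. You yourself flag that certifying $|N(u)\cap N(v)|\le t-1$ for \emph{every} pair ``is the heart of the argument'' --- but that heart is exactly what is missing. The surrounding heuristics, while sensible (the blow-up obstruction, the Poisson codegree argument against a purely random partial Steiner system, the guess that $t-1\ge 2(m-2)$ is what lets one edge carry two vertex-disjoint clique extensions), do not substitute for a construction: they only explain why the problem is hard and why the hypothesis $t\ge 2m-3$ is plausible, and the interpretation of that hypothesis remains conjectural in your write-up. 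Even the $m=3$ base case is asserted rather than proved (that the polarity graph contains $\Theta(n^{3/2})$ triangles is true but requires a count of self-polar triangles, which you do not supply). As it stands, the proposal establishes $\ex(n,K_m,K_{2,t})=O(n^{3/2})$ and nothing beyond the trivial $\Omega(n)$ from below; to close the gap you would need to actually carry out an explicit construction in the spirit of F\"uredi's \cite{fur} and do the exact common-neighbour computation, which is precisely what Zhang and Ge's paper does.
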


The above theorem does not fit into our setting, since cliques are counted rather than complete bipartite graphs. However, it clearly implies the same lower bound if we count any $K_{a,b}$ with $a+b\le m$. In fact, in some cases this gives the best known lower bound even if the forbidden graph is $K_{s,t}$ with $2\le s\le t$.

\bigskip

We now define some notation that we use throughout the paper. Let $G+H$ denote the vertex disjoint union of $G$ and $H$, and $kG$ denote the vertex disjoint union of $k$ copies of $G$.
 We say that a graph $G$ is an extremal graph for $\ex(n,H,F)$ if $G$ has $n$ vertices, is $F$-free and contains $\ex(n,H,F)$ copies of $H$. If $n$, $H$ and $F$ are clear from the context, we just say that $G$ is extremal. Let $\overline{K}_{a,b}$ denote the graph we obtain from $K_{a,b}$ by adding all the edges in the partite set of size $a$. Equivalently, it has $a$ vertices connected to every other vertex, and $b$ vertices with no further edges.

Our paper contains several new results and surveys many old ones. We present all results according to the relationship of the four parameters, $a,b,s,t$. We always assume without loss of generality that $a\le b$ and $s\le t$. The case $a<s$ is considered in Section 2 and is divided into several subsections, depending on the relationship of $b,s$, and $t$. We summarize our findings in the following theorem. The proofs will appear in different subsections.

\begin{thm}\label{a<s}\
\begin{itemize}
    \item[(i)]
    If $a<s$, $s\le b$, then we
have $\ex(n,K_{a,b},K_{s,t})=\Theta(n^b)$.
\item[(ii)]
If $a<s< b$ or if $a<s=b=t$, then we
have $\ex(n,K_{a,b},K_{s,t})=(1+o(1))\binom{s-1}{a}\binom{n}{b}$.
\item[(iii)]
If $a<s\le t< b$, then
there exists $n_0=n_0(a,b,s,t)$ such that if $n\ge n_0$, then every extremal $K_{s,t}$-free $n$-vertex graph, i.e. a graph with the most copies of $K_{a,b}$, contains $K_{s-1,n-s+1}$. 
Moreover, if an $n$-vertex $K_{s,t}$-free graph does not contain $K_{s-1,n-s+1}$, then it has at most $ex(n,K_{a,b},K_{s,t})-\Omega(n^{b-1})$ copies of $K_{a,b}$.
\item[(iv)]
Let $2\le a<t<b$ and $n$ large enough. We have $ex(n,K_{a,b},K_{a+1,t})=\cN(K_{a,b},K_{a,n-a})+\Theta(n)$ if $b<a+t$ and at least one of the following assumptions hold.
\begin{itemize}
    \item 
    $b\le 2a$
    \item
    $2a<t$
    \item
    $a+b<2t-1$. 
\end{itemize} 
Otherwise
 $ex(n,K_{a,b},K_{a+1,t})=\cN(K_{a,b},K_{a,n-a})$ holds. 
 \item[(v)]
 If $a+1<s\le t< b$ and $n$ is large enough, then every extremal $n$-vertex $K_{s,t}$-free graph contains $\overline{K}_{s-1,n-s+1}$. Moreover, if an $n$-vertex $K_{s,t}$-free graph does not contain $\overline{K}_{s-1,n-s+1}$, then it has at most $ex(n,K_{a,b},K_{s,t})-\Omega(n^{b-1})$ copies of $K_{a,b}$.
 \item[(vi)]
 Let $a+1<s\le t< b$ and $n$ is large enough. If $a+b\ge s+t$, then $\ex(n,K_{a,b},K_{s,t})=\cN(K_{a,b},\overline{K}_{s-1,n-s+1})$. If $a+b<s+t$, then $\ex(n,K_{a,b},K_{s,t})=\cN(K_{a,b},\overline{K}_{s-1,n-s+1})+\Theta(n)$.
 \item[(vii)]
 Let $G_1$ be a graph obtained from $\overline{K}_{s-1,n-s+1}$ by adding an $(t-1)$-regular graph of girth at least 5 to the $n-s+1$ vertices forming an independent set. Then if $s\le t< b$ and $n$ is large enough, then $\ex(n,K_{1,b},K_{s,t})=\cN(K_{1,b},G_1)$. 
\end{itemize}
\end{thm}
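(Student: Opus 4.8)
The plan is to prove the two matching inequalities, using throughout that $\cN(K_{1,b},G)=\sum_{v}\binom{d(v)}{b}$, so that the whole problem reduces to controlling the degree sequence of a $K_{s,t}$-free graph. (If $s=1$ the statement is the trivial degree bound $\ex(n,K_{1,b},K_{1,t})=0$, so I assume $s\ge 2$ from now on, which is also what makes part~(iii) of Theorem~\ref{a<s} available.)

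For the lower bound I would first record the degrees in $G_1$: the $s-1$ apex vertices have degree $n-1$, while each of the $n-s+1$ base vertices is joined to the $s-1$ apices and to $t-1$ base vertices, hence has degree $s+t-2$. Thus $\cN(K_{1,b},G_1)=(s-1)\binom{n-1}{b}+(n-s+1)\binom{s+t-2}{b}$. I would note that $(t-1)$-regular graphs of girth at least $5$ on $N=n-s+1$ vertices exist for all large $N$ of the correct parity (with an $O(1)$ adjustment otherwise), so $G_1$ is well defined. To see $G_1$ is $K_{s,t}$-free, suppose it contained a $K_{s,t}$ with sides $A,B$, and let $\alpha,\beta$ be the numbers of base vertices in $A,B$. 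Since there are only $s-1$ apices and $s-1<s\le t$, neither side can be all-apex, so $\alpha,\beta\ge 1$; moreover the base vertices of $A$ are completely joined, inside the regular graph, to those of $B$, giving a $K_{\alpha,\beta}$ there. Girth at least $5$ forbids $C_4=K_{2,2}$, so $\min(\alpha,\beta)=1$; if, say, $\beta=1$, then $B$ contains $t-1\le s-1$ apices, forcing $t=s$ and $\alpha=s$, whence the unique base vertex of $B$ has $\ge s>t-1$ neighbours in a $(t-1)$-regular graph, a contradiction (the case $\alpha=1$ is symmetric).

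For the upper bound let $G$ be an extremal graph. Because $1=a<s\le t<b$ and $n$ is large, part~(iii) of Theorem~\ref{a<s} applies and yields a set $U$ of $s-1$ vertices each adjacent to every other vertex; write $W=V(G)\setminus U$. These $s-1$ vertices contribute at most $(s-1)\binom{n-1}{b}$. For each $w\in W$ I would apply $K_{s,t}$-freeness to the $s$-element set $S=U\cup\{w\}$: every vertex of $W$ is adjacent to all of $U$, so the common neighbourhood of $S$ is exactly $N(w)\cap W$, which must therefore have size at most $t-1$; hence $d_G(w)=(s-1)+|N(w)\cap W|\le s+t-2$. Summing the two contributions gives $\cN(K_{1,b},G)\le (s-1)\binom{n-1}{b}+(n-s+1)\binom{s+t-2}{b}=\cN(K_{1,b},G_1)$, which is exactly the desired bound.

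The upper bound is thus short once part~(iii) supplies the $s-1$ almost-universal vertices, its crux being the observation that each base vertex has at most $t-1$ neighbours inside $W$. I expect the real work to lie on the construction side: verifying $K_{s,t}$-freeness through the girth-$5$ (equivalently $C_4$-free) analysis above, and guaranteeing the existence of the required $(t-1)$-regular graph of girth at least $5$ for every large $n$, where the only genuine annoyance is a parity condition that perturbs the count by at most a constant.
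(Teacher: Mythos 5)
Your proposal addresses only part (vii) of the statement. The statement is the full seven-part Theorem~\ref{a<s}, and parts (i)--(vi) --- including the stability results (iii) and (v) and the exact results (iv) and (vi) --- are nowhere proven; indeed you invoke part (iii) as a black box to produce the $s-1$ vertices joined to everything else. This mirrors the paper's own logical structure (the paper derives (vii) from (v), which in turn rests on (iii)), so using (iii) as an ingredient is legitimate in spirit, but as a proof of the stated theorem the proposal is fundamentally incomplete: the earlier parts carry most of the work of the section and would still need proofs.

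Restricting attention to part (vii), your route is essentially the paper's proof of Proposition~\ref{starr}: obtain a spanning $K_{s-1,n-s+1}$ in the extremal graph, observe that every vertex $w$ outside the small side has at most $t-1$ neighbours among the other $n-s+1$ vertices (else $U\cup\{w\}$ together with $t$ such neighbours forms a $K_{s,t}$), hence $d(w)\le s+t-2$, and compare degree sequences; your direct girth-$5$ case analysis for the $K_{s,t}$-freeness of $G_1$ is correct (the paper instead uses its characterization of $K_{s,t}$-free graphs containing $\overline{K}_{s-1,n-s+1}$), and your use of (iii) rather than (v) even covers $s=2$, where the paper's appeal to Proposition~\ref{overl} formally needs $s>a+1$. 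The genuine gap is the parity case, which you flag on the construction side but never resolve on the upper-bound side. If $t-1$ and $n-s+1$ are both odd, no $(t-1)$-regular graph on $n-s+1$ vertices exists; $G_1$ must then be built from an almost $(t-1)$-regular graph (one vertex of degree $t-2$), so $\cN(K_{1,b},G_1)=(s-1)\binom{n-1}{b}+(n-s)\binom{s+t-2}{b}+\binom{s+t-3}{b}$, which is smaller than your upper bound $(s-1)\binom{n-1}{b}+(n-s+1)\binom{s+t-2}{b}$ by $\binom{s+t-3}{b-1}$, a positive constant whenever $s+t-2\ge b$. Your two bounds therefore do not meet, and the claimed equality is not established in this case. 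The missing step, which the paper supplies, is a handshake argument: in any $K_{s,t}$-free graph containing $K_{s-1,n-s+1}$, the graph induced on the large side has maximum degree $t-1$, and when both $n-s+1$ and $t-1$ are odd it cannot be $(t-1)$-regular, so some vertex of the large side has degree at most $s+t-3$ in $G$; this lowers the upper bound by exactly the constant needed to match $\cN(K_{1,b},G_1)$.
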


 In Section 3, we briefly consider the case $a=s$. In Section 4, we study the case $a>s$, prove some upper bounds and obtain lower bounds by constructions derived from known constructions for Tur\'an problems for Berge hypergraphs (the definitions will be introduced in Section 4). The following theorem contains our results.

\begin{thm}\label{a<b}\
\begin{itemize}
    \item[(i)]
For any $a< b< t$ and $n$ we have $ex(n,K_{a,b},K_{1,t})\le \frac{n}{a+b}(\binom{t-1}{a}\binom{t-1}{b-1}+\binom{t-1}{b}\binom{t-1}{a-1})$, while for any $a<t$ and $n$ we have $ex(n,K_{a,a},K_{1,t})\le \frac{n}{2a}\binom{t-1}{a}\binom{t-1}{a-1}$. In particular, if $2t-2$ divides $n$, we have
$\ex(n,K_{a,b},K_{1,t})=\cN(K_{a,b},\frac{n}{2t-2}K_{t-1,t-1})$.
\item[(ii)]
If $a<t\le \frac{8a}{7}+1$, then $\ex(n,K_{a,a},K_{1,t})=\cN(K_{a,a},\lfloor \frac{n}{2t-2}\rfloor K_{t-1,t-1}+K_{\lfloor \frac{p}{2}\rfloor,\lceil \frac{p}{2}\rceil})$, where $p=n-(2t-2)\lfloor \frac{n}{2t-2}\rfloor$.
\item[(iii)]
Let $2\le s<a\le b<t$. Then $\ex(n,K_{a,b},K_{s,t})=\Omega(n^{\frac{3}{2}-o(1)})$.
\end{itemize}
\end{thm}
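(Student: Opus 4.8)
The plan is to establish the lower bound by a single construction, reducing the question to a Tur\'an-type problem for Berge hypergraphs. Since $2\le s$, any $K_{2,t}$-free graph is $K_{s,t}$-free, so it suffices to produce, for each fixed $\varepsilon>0$ and all large $n$, one $K_{s,t}$-free $n$-vertex graph $G$ with $\cN(K_{a,b},G)\ge n^{3/2-\varepsilon}$. I would first dispatch the range of large $t$: if $t\ge 2(a+b)-3$, then Theorem~\ref{neww} applied with $m=a+b$ produces a $K_{2,t}$-free (hence $K_{s,t}$-free) graph with $\Theta(n^{3/2})$ copies of $K_{a+b}$. Each such clique contains a copy of $K_{a,b}$, and since a copy of $K_{a,b}$ lying inside a clique determines that clique (they occupy the same $a+b$ vertices), distinct cliques yield distinct copies; thus $\cN(K_{a,b},G)=\Omega(n^{3/2})$ with no loss in the exponent. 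This confirms that $3/2$ is the right target and isolates the genuinely new regime $b<t<2(a+b)-3$, where no $(a+b)$-clique can be used because $K_{a+b}$ already contains $K_{s,t}$.

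For the remaining regime I would realize $G$ as a gadget expansion of a sparse hypergraph. Let $\cH$ be an $(a+b)$-uniform hypergraph on $V(G)$ that is linear (any two vertices lie in at most one common hyperedge) and has no short Berge cycle; from the theory of Berge-Tur\'an numbers such hypergraphs exist with $n^{3/2-o(1)}$ hyperedges, the binding constraint being the absence of a Berge copy of $C_4$, whose Tur\'an number is of order $n^{3/2}$. Replace each hyperedge $e$ by a complete bipartite graph $K_{a,b}$ on a fixed partition of its $a+b$ vertices into an $a$-set and a $b$-set. Each gadget then contributes a copy of $K_{a,b}$ occupying exactly the $a+b$ vertices of $e$, so distinct hyperedges give distinct copies and $\cN(K_{a,b},G)\ge |E(\cH)|=n^{3/2-o(1)}$.

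The essential point is that $G$ is $K_{s,t}$-free. A single gadget cannot contain $K_{s,t}$, since $t>b\ge a$ forces $K_{a,b}\not\supseteq K_{s,t}$, so any copy of $K_{s,t}$ in $G$ would have to be stitched together from several gadgets. Because $\cH$ is linear, each edge of $G$ lies in a unique gadget (the unique hyperedge containing both endpoints, with them on opposite sides), and I would argue that assembling the $st$ edges of a $K_{s,t}$ across distinct gadgets forces two gadgets to share two vertices, violating linearity, or produces a short closed Berge walk in $\cH$, violating the girth hypothesis. Turning this qualitative picture into a proof -- showing that the overlaps needed to complete a $K_{s,t}$ from gadget-copies of $K_{a,b}$ necessarily create a forbidden configuration in $\cH$ -- is the main obstacle, and the girth requirement on $\cH$ is exactly what is spent there; it is also why only the exponent $3/2-o(1)$, rather than the clean $3/2$ of the large-$t$ case, is obtained here. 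For context, since $s<a\le b<t$ gives $s\le a\le b\le t$, Proposition~\ref{gemevi} supplies the upper bound $\ex(n,K_{a,b},K_{s,t})=O(n^{s})$, so the estimate is a lower bound that is not claimed to be tight.
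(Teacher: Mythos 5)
Your proposal only addresses part (iii); parts (i) and (ii) are never touched. These need separate arguments of a different flavor: the paper proves (i) by bounding, for each vertex $v$ of a $K_{1,t}$-free graph, the number of copies of $K_{a,b}$ through $v$ by $\binom{t-1}{a}\binom{t-1}{b-1}+\binom{t-1}{b}\binom{t-1}{a-1}$ (resp.\ $\binom{t-1}{a}\binom{t-1}{a-1}$ when $a=b$), with equality on every vertex of a disjoint union of $K_{t-1,t-1}$'s, and proves (ii) by showing that under $t\le \frac{8a}{7}+1$ every vertex not lying in a $K_{t-1,t-1}$ component lies in at most half as many copies of $K_{a,a}$, so components other than $K_{t-1,t-1}$ can be swapped out. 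So even if your part (iii) argument were airtight, the theorem as stated would remain unproved.

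For part (iii) itself, your route is the paper's route: dispatch $t\ge 2(a+b)-3$ via Theorem \ref{neww} (the paper makes exactly this remark, and your observation that distinct cliques give distinct copies of $K_{a,b}$ is correct), and otherwise place a $K_{a,b}$ into each hyperedge of an $(a+b)$-uniform hypergraph of girth at least $5$ with $n^{3/2-o(1)}$ hyperedges (Ruzsa's construction, via Timmons--Verstra\"ete). The genuine gap is that the crux --- that the resulting graph is $K_{2,t}$-free, hence $K_{s,t}$-free --- is exactly what you defer as ``the main obstacle,'' and this is the entire content of the paper's key proposition. The paper's verification is a finite case analysis, not a vague appeal to girth: by linearity every edge of $G$ lies in a unique gadget; given a $K_{2,t}$ with parts $\{u,u'\}$ and $T$ and vertices $v,v'\in T$, either the four edges $uv,u'v,uv',u'v'$ come from four distinct hyperedges, which form a Berge-$C_4$, or some hyperedge $h$ contains three of $u,u',v,v'$. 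If $h\supseteq\{u,u',v\}$, then either some $v''\in T\setminus h$ yields a Berge triangle through $h$ and the two hyperedges carrying $uv''$ and $u'v''$, or $T\subseteq h$, in which case $t>b$ forces some vertex of $T$ into the same partite set of the gadget's $K_{a,b}$ as $u$, contradicting that $uv''$ must be a gadget edge. If $h\supseteq\{u,v,v'\}$ but $u'\notin h$, the edges $u'v,u'v'$ give either a linearity violation or a Berge triangle. Without carrying out this (or an equivalent) analysis, your construction is an outline whose critical claim is unverified.
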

\section{$a<s$}

\subsection{$a<s$, $s\le b$}

In this case, it is easy to determine the order of magnitude of $ex(n,K_{a,b},K_{s,t})$.


\begin{proof}[Proof of Theorem \ref{a<s} (i)]
The lower bound is given by $K_{s-1,n-s+1}$, which contains $\Theta(n^b)$, more precisely, $\binom{n-s+1}{b}\binom{s-1}{a}$ copies of $K_{a,b}$. For the upper bound, observe that if we choose in a $K_{s,t}$-free graph a $b$-set, its vertices have at most $t-1$ common neighbors. Therefore, $\ex(n,K_{a,b},K_{s,t})\le\binom{t-1}{a}\binom{n}{b}$.
\end{proof}

We can extend this result to determine the asymptotics of $ex(n,K_{a,b},K_{s,t})$ in the case $s<b$.


\begin{proof}[Proof of Theorem \ref{a<s} (ii)]
The lower bound is again given by $K_{s-1,n-s+1}$. For the upper bound, let $G$ be a $K_{s,t}$-free graph on $n$ vertices, and choose a $b$-set $B$ of vertices in $G$. If $t\le b$, then those $b$ vertices have at most $s-1$ common neighbors, thus there are at most $\binom{s-1}{a}$ copies of $K_{a,b}$ containing $B$ as the larger partite set, finishing the proof.

If $t>b$, then $B$ might be the larger partite set of several copies of $K_{a,b}$. However, $G$ contains $O(n^s)$ copies of $K_{s,b}$ by the already proven Theorem \ref{a<s} (i). This means that $O(n^s)$ $b$-sets have at least $s$ common neighbors in $G$, let $\cB$ be the family of such $b$-sets. Let $\cB'$ denote the family of the other $(1-o(1))\binom{n}{b}$ $b$-sets of vertices in $G$. Observe that vertices in a $b$-set in $\cB$ have at most $t-1$ common neighbors. Therefore, we have at most $\binom{t-1}{b}|\cB|=O(n^s)$ copies of $K_{a,b}$ with the larger partite set in $\cB$, and at most $(1-o(1))\binom{s-1}{a}\binom{n}{b}$ copies of $K_{a,b}$ with the larger partite set in $\cB'$, finishing the proof.
\end{proof}

In the remaining case $a<s=b<t$, the asymptotic result does not necessarily hold. If $a=1$ and $b=2$, $K_{1,n-1}$ contains $\binom{n-1}{2}$ copies of $K_{1,2}$. However, Gerbner and Palmer \cite{GP2017} showed that in this case the F\"uredi graph gives the asymptotics, i.e. $\ex(n,K_{1,2},K_{2,t})=(1+o(1))\frac{t-1}{2}n^2$.

For other values of $b=s$ in the case $a=1$, recall that if $t\ge (s-1)!+1$, the projective norm graphs \cite{arsz,krsz} are $K_{s,t}$-free with $(1+o(1))\frac{1}{2}n^{2-1/s}$ edges. As projective norm graphs are close to regular, this shows that they contain $(1+o(1))n^s/s!$ copies of $K_{1,b}$, just like $K_{s-1,n-s+1}$. This suggests that the asymptotic result might hold in the case $a=1$, $b=s>2$.

In the case $s\le b$, we still have two possibilities: either $b\le t$, or $t<b$. We will be able to improve the above proposition by partially describing the structure of the extremal graphs and obtaining exact results in the case $b$ is large compared to $t$.

\subsection{$a<s$, $t< b$}

We start by proving Theorem \ref{a<s} (iii) which we restate for convenience.

\begin{thm}\label{largen} If $a<s\le t< b$, then
there exists $n_0=n_0(a,b,s,t)$ such that if $n\ge n_0$, then every extremal $K_{s,t}$-free $n$-vertex graph, i.e. a graph with the most copies of $K_{a,b}$, contains $K_{s-1,n-s+1}$. 
Moreover, if an $n$-vertex $K_{s,t}$-free graph does not contain $K_{s-1,n-s+1}$, then it has at most $ex(n,K_{a,b},K_{s,t})-\Omega(n^{b-1})$ copies of $K_{a,b}$.
\end{thm}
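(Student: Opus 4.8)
My plan is to count copies of $K_{a,b}$ in two ways and locate the whole statement in the second-order ($n^{b-1}$) term. For a vertex set $X$ write $N(X)$ for its set of common neighbours. Since $a<b$, a copy of $K_{a,b}$ is determined by its $b$-side $B$ together with an $a$-subset of $N(B)$, and dually by its $a$-side $A$ together with a $b$-subset of $N(A)$, so
\[
\cN(K_{a,b},G)=\sum_{|B|=b}\binom{|N(B)|}{a}=\sum_{|A|=a}\binom{|N(A)|}{b}.
\]
As $b>t$ and $G$ is $K_{s,t}$-free, every set of at least $t$ vertices has at most $s-1$ common neighbours; in particular $\sum_{|A|=a}\binom{|N(A)|}{t}=\sum_{|T|=t}\binom{|N(T)|}{a}\le\binom{s-1}{a}\binom{n}{t}$. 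Writing $\binom{m}{b}=\binom{m}{t}\psi(m)$ with $\psi$ increasing and $\psi(n)=\binom{n}{b}/\binom{n}{t}$, this already gives $\cN(K_{a,b},G)\le\binom{s-1}{a}\binom{n}{b}$ for every $K_{s,t}$-free $G$, while $K_{s-1,n-s+1}$ attains $\binom{s-1}{a}\binom{n-s+1}{b}$. Hence $\ex(n,K_{a,b},K_{s,t})=\binom{s-1}{a}\binom{n}{b}-\Theta(n^{b-1})$, and the theorem is entirely a statement about the coefficient of $n^{b-1}$.

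First I would discard graphs that are not almost extremal: if $\cN(K_{a,b},G)\le\ex(n,K_{a,b},K_{s,t})-n^{b-1}$ there is nothing to prove, so assume the correction $\binom{s-1}{a}\binom{n}{b}-\cN(K_{a,b},G)$ is $O(n^{b-1})$. Two structural facts from $K_{s,t}$-freeness then force a rigid ``centre''. (i) If $X\neq Y$ with $|X\cup Y|\ge s$ then $|N(X)\cap N(Y)|\le t-1<b$, so the $b$-subsets of $N(X)$ and of $N(Y)$ are disjoint. (ii) At most $s-1$ vertices have degree exceeding $\rho n$ for a suitable constant $\rho<1$ (e.g. $\rho=1-\tfrac{1}{2s}$), since any $s$ such vertices would share more than $t-1$ common neighbours. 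Using the convexity factor $\psi$ against the budget $\sum_A\binom{|N(A)|}{t}\le\binom{s-1}{a}\binom{n}{t}$, I would show that in the almost-extremal regime every $a$-set with $|N(A)|$ bounded away from $n$ contributes, in total, only $o(n^{b-1})$ to $\cN$; here $b>t$ is essential. The surviving ``almost universal'' $a$-sets, those with $|N(A)|=n-O(1)$, must number about $\binom{s-1}{a}$ to keep the count high, and by (ii) they are exactly the $a$-subsets of a single $(s-1)$-set $S_0$ of $s-1$ almost universal vertices (each $v\in S_0$ missing only $O(1)$ vertices, so each $A\subseteq S_0$ has $|N(A)|=n-O(1)$). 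Thus $\cN(K_{a,b},G)=\sum_{A\subseteq S_0,\,|A|=a}\binom{|N(A)|}{b}+o(n^{b-1})$.

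Setting $r_A:=n-|N(A)|=O(1)$ and expanding $\binom{|N(A)|}{b}=\binom{n}{b}-r_A\binom{n}{b-1}+O(n^{b-2})$, the correction becomes $\binom{n}{b-1}\sum_{A\subseteq S_0}r_A+o(n^{b-1})$, so maximising $\cN$ is the same as minimising $\sum_{A\subseteq S_0}r_A$. A double count shows that, up to the $O(1)$ contribution of the vertices of $S_0$ themselves, $\sum_{A\subseteq S_0}|N(A)|=\sum_{w\notin S_0}\binom{s-1-|M_w|}{a}$, where $M_w\subseteq S_0$ is the set of non-neighbours of $w$ in $S_0$; this is largest when as many $w$ as possible satisfy $M_w=\emptyset$, i.e. $w\in N(S_0)$. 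When $G$ contains no $K_{s-1,n-s+1}$ we have $|N(S_0)|\le n-s$, one short of the value $n-s+1$ available once $K_{s-1,n-s+1}$ is present, so at least one vertex $w$ is forced to have $|M_w|\ge1$. Comparing the two optima, $\sum_{A\subseteq S_0}r_A$ must exceed its $K_{s-1,n-s+1}$-value by at least $\binom{s-1}{a}-\binom{s-2}{a}=\binom{s-2}{a-1}\ge1$. Hence the correction is larger by $\binom{s-2}{a-1}\binom{n}{b-1}(1+o(1))$, giving $\cN(K_{a,b},G)\le\ex(n,K_{a,b},K_{s,t})-\Omega(n^{b-1})$; as this is strictly below $\ex$ for large $n$, every extremal graph contains $K_{s-1,n-s+1}$.

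I expect the main obstacle to be the bookkeeping at order $n^{b-1}$, in two places. First, one must show that $a$-sets of intermediate neighbourhood size contribute only $o(n^{b-1})$ rather than the easily obtained $o(n^{b})$; this requires spending the shared budget $\sum_A\binom{|N(A)|}{t}\le\binom{s-1}{a}\binom{n}{t}$ carefully across several dyadic scales of $|N(A)|$ and using $b>t$ to beat each scale. Second, the optimisation step must convert the single inequality $|N(S_0)|\le n-s$ into a uniform gain of $\Omega(1)$ in $\sum_{A\subseteq S_0}r_A$, even though the individual defects $r_A$ for different $a$-subsets $A$ of $S_0$ are coupled only through the adjacencies between $S_0$ and the remaining vertices.
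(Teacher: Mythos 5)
Your global strategy (count from the $a$-side, use the budget $\sum_{|A|=a}\binom{|N(A)|}{t}=\sum_{|T|=t}\binom{|N(T)|}{a}\le\binom{s-1}{a}\binom{n}{t}$ together with the monotone ratio $\psi$, then optimize the second-order term $\sum_{A\subseteq S_0}r_A$) is genuinely different from the paper's, which works from the $b$-side (good/bad $b$-sets, Claim 2.2 on the sets $N_i=N(S_i)$) and, crucially, finishes with a local rewiring: in the regime $n-C^*<m<n-s+1$ it moves one vertex $x$ onto $S_i$, gaining $\Omega(n^{b-1})$ copies while losing only $O(n^{a-1})+O(n^{t-1})$, thereby avoiding any delicate benchmark comparison. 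However, your sketch has a genuine gap exactly at the step you flag: the claim that $a$-sets with $|N(A)|$ bounded away from $n$ contribute $o(n^{b-1})$ does not follow from the budget-plus-convexity mechanism you propose. Writing $f(m)=\binom{m}{b}=\psi(m)g(m)$ with $g(m)=\binom{m}{t}$, the sharpest one-shot (even dyadic) use of the budget gives, for sets at scale $m\le(1-\delta)n$, a contribution at most $\frac{\psi((1-\delta)n)}{\psi(n)-\psi((1-\delta)n)}$ times the total deficit $\binom{s-1}{a}\binom{n}{b}-\cN(K_{a,b},G)$; since the deficit is itself $\Theta(n^{b-1})$ (this is the quantity being estimated), you only get $O(n^{b-1})$ with an uncontrolled constant, and the factor blows up like $1/((b-t)\delta)$ as $\delta\to 0$, so $a$-sets with $|N(A)|=(1-o(1))n$ that are \emph{not} inside $S_0$ escape entirely. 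They are only killed by the $K_{s,t}$-freeness inequality $|N(A^*)|\le (n-|N(S_0)|)+t-1$ for $A^*\not\subseteq S_0$, which requires already knowing $|N(S_0)|\ge n-O(1)$ --- the conclusion you are heading for. This circularity can be broken (e.g.\ by a bootstrapping iteration driving the defect $\delta$ down to $O(1/n)$, after which non-$S_0$ sets have $O(1)$-size neighbourhoods and a separate count shows they contribute only $O(n^{t-1})$), but that is a missing idea, not bookkeeping.

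There is a second, fixable error in your end-game: you compare $\sum_{A\subseteq S_0}r_A$ against ``its $K_{s-1,n-s+1}$-value,'' but when $a+1<s$ that is the wrong benchmark. For $K_{s-1,n-s+1}$ one has $\sum_A r_A=(s-1)\binom{s-1}{a}$, whereas the true second-order optimum is $a\binom{s-1}{a}=(s-1)\binom{s-2}{a-1}$, attained when $S_0$ is additionally a clique (this is exactly why the extremal graph is $\overline{K}_{s-1,n-s+1}$, parts (v)--(vi) of Theorem 1.4). Indeed, the graph with $S_0$ a clique, all but one outside vertex complete to $S_0$ and one vertex missing a single edge to $S_0$ contains no $K_{s-1,n-s+1}$ yet satisfies $\sum_A r_A=a\binom{s-1}{a}+\binom{s-2}{a-1}<(s-1)\binom{s-1}{a}$, i.e.\ it \emph{beats} $K_{s-1,n-s+1}$ by $\Theta(n^{b-1})$ copies; so exceeding the $K_{s-1,n-s+1}$-value proves nothing. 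Your own double count does rescue the argument once aimed correctly: splitting $\sum_{A\subseteq S_0}r_A=n\binom{s-1}{a}-\sum_{w\notin S_0}\binom{s-1-|M_w|}{a}-\sum_{w\in S_0}\binom{s-2-|M'_w|}{a}$, the internal sum is at most $(s-1)\binom{s-2}{a}$ regardless, and the absence of $K_{s-1,n-s+1}$ caps the external sum at $(n-s)\binom{s-1}{a}+\binom{s-2}{a}$, giving $\sum_A r_A\ge s\binom{s-2}{a-1}$, i.e.\ the optimum plus $\binom{s-2}{a-1}\ge 1$, which (since $\ex(n,K_{a,b},K_{s,t})=\binom{s-1}{a}\binom{n}{b}-a\binom{s-1}{a}\binom{n}{b-1}+o(n^{b-1})$) yields the desired $\Omega(n^{b-1})$ gap. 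So the numerical gain you extract is right, but the comparison must be made against $\overline{K}_{s-1,n-s+1}$, not $K_{s-1,n-s+1}$, and the internal-$S_0$ term cannot be dismissed as negligible, since its $O(1)$ variation multiplies $\binom{n}{b-1}$ and is of the same order as the gain.
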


\begin{proof}
Recall that the already proven part (ii) of  Theorem \ref{a<s} shows that $ex(n,K_{a,b},K_{s,t})=(1+o(1))\cN(K_{a,b},K_{s-1,n-s+1})$. Let us also recall its proof for the present case $t\le b$: in a $K_{s,t}$-free graph $G$ on $n$ vertices, each of the $\binom{n}{b}$ different $b$-sets of vertices has at most $s-1$ common neighbors, thus it forms the larger partite set in at most $\binom{s-1}{a}$ copies of $K_{a,b}$.

For a subset $S$ of vertices, we denote by $N_G(S)$ the set of its common neighbors in $G$. We say that a $b$-set $B$ is $G$-good if $|N_G(B)|=s-1$ and $B$ is $G$-bad if $|N_G(B)|\le s-2$. By the above, if the number of $G$-bad $b$-subsets is at least $\alpha n^{b-1}$ for some $\alpha=\alpha(a,s,b)>0$, then $\cN(K_{a,b},G)<\binom{s-1}{a}\binom{n}{b}-(\binom{s-1}{a}-\binom{s-2}{a})\alpha n^{b-1}<\cN(K_{a,b},K_{s-1,n-s+1})-\Omega(n^{b-1})$.

Let $S_1,S_2,\dots,S_\ell$ be the $(s-1)$-subsets of vertices of $G$ with $|N_G(S_i)|\ge b$ and let us write $N_i=N_G(S_i)$ and $m_i=|N_i|$. Clearly, the number of $G$-good $b$-sets is at most $\sum_{i=1}^{\ell}\binom{m_i}{b}$. The  following simple claim will be crucial to the proof.

\begin{clm}\label{klem}
Let $B$ be a $b$-set of vertices such that $t\le |N_i\cap B|<b$ holds for some $i=1,2,\dots,\ell$. Then $B$ is $G$-bad. Furthermore, $|N_i\cap N_j|<t$ for any $i\neq j$ and the number of $G$-bad $b$-subsets is at least $\frac{1}{\binom{b}{t}}\sum_{i=1}^\ell\binom{m_i}{t}\binom{n-m_i}{b-t}$.
\end{clm}

\begin{proof}[Proof of Claim]
Suppose to the contrary that $B$ is $G$-good and thus, by definition, there exists $j$ such that $N_G(B)=S_j$. As $|N_i\cap B|<b$ implies $B\not\subseteq N_i$, we obtain that $i\neq j$. Then $|S_i\cup S_j|\ge s$ and therefore $t\le |N_i\cap B|$ implies $G[S_i\cup S_j,N_i\cap B]$ contains a copy of $K_{s,t}$. This contradicts the assumption that $G$ is $K_{s,t}$-free. This contradiction proves the first statement of the claim.

If $|N_i\cap N_j|\ge t$, then any $b$-subset $B$ of $N_i$ that contains at least $t$ elements of $N_i \cap N_j$ and at least one element of $N_i\setminus N_j$ contradicts the already proven part of the claim.

Finally, let us count the pairs $(M,B)$, where $M=N_i$ for some $i$, $B$ is a $b$-set and $|B\cap M|=t$. On the one hand, the number of such pairs is exactly $\sum_{i=1}^\ell \binom{m_i}{t}\binom{n-m_i}{b-t}$. On the other hand, by the already proven part of the claim, the $b$-set $B$ in all such pairs must be $G$-bad, and for every $G$-bad $B$ and $t$-subset $T$ of $B$, there can be at most one $N_i$ with $N_i\cap B=T$. Thus the number of pairs $(M,B)$ is at most $\binom{b}{t}$ times the number of $G$-bad $b$-sets.
\end{proof}

With Claim \ref{klem} in hand, we are ready to prove the theorem. We distinguish three cases according to $m=\max \{m_i: i=1,2,\dots,\ell\}$.

\vskip 0.2truecm

\textsc{Case I} $m<\frac{n}{2}$.

\vskip 0.15truecm

By Claim \ref{klem} and the observation before the claim, we obtain that the number of $G$-good $b$-sets is at most $\max \{\binom{n}{b}-\frac{1}{\binom{b}{t}}\sum_i\binom{m_i}{t}\binom{n-m_i}{b-t};\sum_i\binom{m_i}{b}\}$. The assumption $m<\frac{n}{2}$ implies that $\frac{1}{\binom{b}{t}}\binom{m_i}{t}\binom{n-m_i}{b-t}\ge C\binom{m_i}{b}$ holds for all $i$ with some $C>0$ depending only on $b$ and $t$. As a consequence, the number of $G$-good $b$-sets is at most $\frac{1}{C+1}\binom{n}{b}$ and thus $\cN(K_{a,b},G)<\cN(K_{a,b},K_{s-1,n-s+1})-\Omega(n^{b-1})$.

\vskip 0.2truecm

\textsc{Case II} $\frac{n}{2}<m< n-C^*$ for a suitably chosen $C^*$.

\vskip 0.15truecm

If $|N_i|=m$, then by Claim \ref{klem}, all $b$-sets $B$ not contained in $N_i$ but meeting $N_i$ in at least $t$ points are $G$-bad. Their number is at least $\binom{m}{b-1}(n-m)\ge \alpha n^{b-1}$. This yields $\cN(K_{a,b},G)<\cN(K_{a,b},K_{s-1,n-s+1})-\Omega(n^{b-1})$.
\vskip 0.2truecm

\textsc{Case III} $ n-C^*<m<n-s+1$.

\vskip 0.15truecm

Let $i$ be such that $|N_i|=m$. Let $x\in V\setminus (N_i\cup S_i)$ be arbitrary and let us consider the graph $G_x$ that we obtain from $G$ by removing all edges incident to $x$ and then adding all edges between $x$ and $S_i$. i.e. $G[V\setminus \{x\}]=G_x[V\setminus \{x\}]$ and $N_{G_x}(x)=S_i$. Clearly, $G_x$ is $K_{s,t}$-free, as $G[V\setminus \{x\}]=G_x[V\setminus \{x\}]$ and the degree of $x$ is $s-1$. 

As $x\notin N_i$, there exists at least one vertex $y$ of $S_i$ such that $xy\notin E(G)$ and thus at least $\binom{s-2}{a-1}\binom{m}{b-1}$ copies of $K_{a,b}$ contain $x$ in $G_x$ that do not exist in $G$. On the other hand, $|N_G(x)\cap N_i|<t$ as otherwise $G[N_G(x)\cap N_i,S_i\cup \{x\}]$ would contain a $K_{s,t}$. Therefore we can bound the number of $K_{a,b}$'s in $G$ that contain $x$ but do not exist in $G_x$ anymore as follows:
\begin{itemize}
    \item 
    at most $\binom{n-m+t-1}{b}\binom{n}{a-1}=O(n^{a-1})$ copies in which $x$ is in the part of size $a$ (pick the other vertices of the part of size $a$ arbitrarily and then pick the $b$ vertices of the other part among $V\setminus N_i\cup (N_i\cap N_G(x)) $);
    \item
    $\binom{n-m+t-1}{a}\sum_{j=0}^{t-1}\binom{m}{j}\binom{n-m}{b-j}=O(n^{t-1})$ copies in which $x$ is in the part of size $b$. Indeed, if the part of size $b$ of a $K_{a,b}$ contains at least $t$ vertices from $N_i$, then its part of size $a$ cannot contain any vertex from outside $S_i$, and thus these copies of $K_{a,b}$ are present in $G_x$. Vertices of the part of size $a$ are chosen among the neighbors of $x$.
\end{itemize}
We obtained that $\cN(K_{a,b},G)<\cN(K_{a,b},G_x)-\Omega(n^{b-1})$. Thus, in all the above cases, $\cN(K_{a,b},G)<\ex(n,K_{a,b},K_{s,t})-\Omega(n^{b-1})$. This shows that we can find the extremal graph in the only remaining case, $m=n-s+1$, thus $G$ contains $K_{s-1,n-s+1}.$ 
\end{proof}

With the stability result of part (iii) of Theorem \ref{a<s} in hand, we are ready to prove (iv) which we restate here.

\begin{thm}\label{corol}
Let $2\le a<t<b$ and $n$ large enough. We have $ex(n,K_{a,b},K_{a+1,t})=\cN(K_{a,b},K_{a,n-a})+\Theta(n)$ if $b<a+t$ and at least one of the following assumptions hold.

(i) $b\le 2a$

(ii) $2a<t$

(iii) $a+b<2t-1$. 

Otherwise
 $ex(n,K_{a,b},K_{a+1,t})=\cN(K_{a,b},K_{a,n-a})$ holds. 
\end{thm}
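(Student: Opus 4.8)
The plan is to start from the stability statement of Theorem~\ref{largen}: for $n$ large every extremal graph $G$ contains a copy of $K_{a,n-a}$, so I fix the $a$-element dominating set $A$ and set $B=V(G)\setminus A$, viewing $G$ as $K_{a,n-a}$ together with some additional edges inside $A$ and inside $B$. Since adding edges can only create, never destroy, (not necessarily induced) copies of $K_{a,b}$, and since $\cN(K_{a,b},K_{a,n-a})=\binom{n-a}{b}$ — the only copies inside $K_{a,n-a}$ have $A$ as their $a$-side and an arbitrary $b$-subset of $B$ as their $b$-side — the problem reduces to maximizing the number of \emph{extra} copies, those using at least one edge inside $A$ or inside $B$, over all $K_{a+1,t}$-free $G\supseteq K_{a,n-a}$. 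First I would record the basic constraint: for every $v\in B$ the set $A\cup\{v\}$ already has $N_B(v)$ among its common neighbours, so $K_{a+1,t}$-freeness forces $\deg_{G[B]}(v)\le t-1$, and in particular $G[B]$ has $O(n)$ edges.

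Next I would classify an extra copy by its \emph{profile} $(x_A,x_B,y_A,y_B)$, recording how the $a$-side $X$ and the $b$-side $Y$ split between $A$ and $B$; here $x_A+x_B=a$, $y_A+y_B=b$, the sets $X_A,Y_A$ are disjoint subsets of $A$ (so $x_A+y_A\le a$), and the copy forces a complete bipartite $K_{x_B,y_B}$ inside $G[B]$ together with a complete bipartite $K_{x_A,y_A}$ inside $G[A]$. Two facts drive everything. Since $X=A$ yields exactly the base copies, every extra copy has $x_B\ge 1$, and then any $x\in X_B$ gives $Y_B\subseteq N_B(x)$, whence $y_B\le t-1$ and $y_B=b-y_A\ge b-a$. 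Therefore, if $b\ge a+t$ then $y_B\ge b-a\ge t$ is impossible, so there are no extra copies and $\ex(n,K_{a,b},K_{a+1,t})=\binom{n-a}{b}=\cN(K_{a,b},K_{a,n-a})$. And when $b<a+t$ the number of extra copies is $O(n)$: each determines a copy of the connected graph $K_{x_B,y_B}$ in the bounded-degree graph $G[B]$ (of which there are $O(n)$), extended into $A$ in $O(1)$ ways. Thus in all cases $\ex(n,K_{a,b},K_{a+1,t})=\binom{n-a}{b}+O(n)$, and the entire theorem reduces to deciding when the maximal number of extra copies is $\Theta(n)$ rather than $0$.

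The dichotomy is governed by a short list of forbidden configurations obtained by applying $K_{a+1,t}$-freeness to sets built from the profile. The decisive ones are: an $(a+1)$-subset of $Y_B$, whose common neighbourhood contains $A\cup X_B$ of size $a+x_B$, forcing $x_B\le t-1-a$ whenever $y_B\ge a+1$; and the two mixed sets $Y_A\cup X_B$ and $X_A\cup Y_B$, whose common neighbourhoods contain $X_A\cup Y_B$ and $Y_A\cup X_B$ respectively. Substituting the size relations $x_B+y_B\ge b$, $x_B,y_B\le t-1$ and $y_B\ge b-a$ into these inequalities, the case analysis against hypotheses (i)--(iii) determines exactly when a profile survives all the forbidden configurations; in particular, if $b<a+t$ but none of (i)--(iii) holds, every profile triggers one of them, so no extra copy exists and $\ex=\binom{n-a}{b}$. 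For the $\Theta(n)$ constructions I would let $G[B]$ be a disjoint union of $\Theta(n)$ copies of the critical gadget $K_{x_B,y_B}$ prescribed by a surviving profile, each gadget contributing at least one extra copy: under (i) ($b\le 2a$) or (ii) ($2a<t$) one keeps $A$ independent and uses the gadget $K_{a,\,b-a}$, whereas the remaining feasible profiles have $x_A\ge 1$ and so require also installing a complete bipartite graph $K_{x_A,y_A}$ inside $A$. Matching $\Omega(n)$ from the construction against the $O(n)$ upper bound then gives $\Theta(n)$.

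The hard part will be verifying that these constructions are genuinely $K_{a+1,t}$-free, together with the matching claim that the profile analysis misses no admissible configuration. Because $A$ is joined to all of $B$, common neighbourhoods arise from several independent mechanisms at once: all of $A$ is common to any set contained in $B$, each gadget carries its own bipartite common neighbourhoods, and once edges are placed inside $A$ they interact with the gadget to spawn further dangerous sets (for instance $Y_A\cup X_B$ acquiring $X_A\cup Y_B$ as common neighbours). Controlling these simultaneously is precisely what fixes the boundary between the $+\Theta(n)$ and the exact $\cN$ regimes, and is where the delicate tuning of the gadget sizes $x_B,y_B$ and of the split $(x_A,y_A)$ of $A$ — and hence the bulk of the bookkeeping — will be concentrated.
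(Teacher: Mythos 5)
Your framework is exactly the one the paper uses: invoke Theorem~\ref{largen} to place $K_{a,n-a}$ with sides $A,B$ inside an extremal graph, classify the \emph{extra} copies of $K_{a,b}$ by how their two sides split over $A$ and $B$ (your profile $(x_A,x_B,y_A,y_B)$ is the paper's $K[A]$, $K[B]$ with parts $p\le q$), get the $O(n)$ upper bound from $\deg_{G[B]}(v)\le t-1$ (the paper does this via Proposition~\ref{gemevi}), note that $b\ge a+t$ forces $y_B\ge t$ and hence kills all extra copies, and derive contradictions for the remaining exact cases from forbidden complete bipartite configurations, your ``mixed sets'' $Y_A\cup X_B$ versus $X_A\cup Y_B$ being precisely the paper's reversing operation; even your gadget $K_{a,b-a}$ for cases (i)/(ii) is the paper's construction. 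The parts you actually carry out (the reduction, the $O(n)$ bound, the case $b\ge a+t$) are correct. But the two steps you defer --- the profile-elimination case analysis behind the ``otherwise exact'' claim, and the specification and $K_{a+1,t}$-freeness of the constructions (for (iii) you name no gadget at all) --- are the entire mathematical content of the theorem; in the paper they occupy essentially the whole proof, with non-obvious arithmetic such as $y_B\ge b-p> 3b/4\ge t-\tfrac12$ extracted from $b>2a$ and $a+b\ge 2t-1$.

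Moreover, the deferred step is not ``bookkeeping'': your assertion that substituting the size relations into your configurations ``determines exactly'' the dichotomy (i)--(iii) is false, and carrying it out honestly refutes rather than confirms the statement. Take $(a,t,b)=(5,9,11)$: then $b<a+t=14$, (i) and (ii) fail, (iii) holds ($a+b=16<17$), so the theorem asserts $\ex(n,K_{5,11},K_{6,9})=\cN(K_{5,11},K_{5,n-5})+\Theta(n)$. Now run your own analysis on an extra copy: $x_B\ge1$ and $y_B\ge 6$; your first configuration ($A\cup X_B$ joined to $Y_B$) forces $y_B\le 8$; your second forces $5+x_B\le 8$, so $x_B\le 3$ and $x_A\ge 2$; disjointness $x_A+y_A\le 5$ gives $y_A\le 3$, hence $y_B=11-y_A\ge 8$, pinning $(x_A,x_B,y_A,y_B)=(2,3,3,8)$; finally your swap configuration joins $Y_A\cup X_B$ ($6$ vertices) completely to $X_A\cup Y_B$ ($10$ vertices), a $K_{6,10}\supseteq K_{6,9}$. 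So no extra copy can exist, and with Theorem~\ref{largen} this proves the \emph{exact} result for these parameters, contradicting case (iii). (This is not a defect of your configurations but of case (iii) itself: the paper's construction there --- $K_{a-q,q'}$ inside $A$ and disjoint copies of $K_{q,b-q'}$ inside $B$ with $q=\lfloor(t-1)/2\rfloor$, $q'=\lceil(t-1)/2\rceil$ --- is not $K_{a+1,t}$-free for $(5,9,11)$: with $q=q'=4$, the nine vertices $A\cup P_1$, where $P_1$ is the $4$-side of one gadget, are completely joined to the seven vertices of its $7$-side $P_2$, yielding $K_{6,9}$; the paper's claim that every copy of $K_{a+1,t}$ must respect the two bipartitions overlooks copies that swallow all of $A$ into one side as an independent set.) So as written, your proposal proves neither the $\Theta(n)$ lower bounds nor the ``otherwise exact'' upper bound, and the analysis you postpone is exactly where both the substance and, as it turns out, the trouble of this theorem are located.
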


\begin{proof}
 Let $G$ be a $K_{a+1,t}$-free graph on $n$ vertices that contains the most copies of $K_{a,b}$. By Theorem \ref{largen} (part (iii) of Theorem \ref{a<s}), $G$ contains $K_{a,n-a}$ with partite sets $A$ of order $a$ and $B$ of order $n-a$. Observe that for a copy $K$ of $K_{a,b}$, both partite sets $A$ and $B$ intersect $K$ in complete bipartite graphs or independent sets. Let us assume that $K$ is not a subgraph of this $K_{a,n-a}$, then the intersection of $K$ and $B$ is not an independent set.
Let $K[A]$ denote the intersection of $A$ and $K$, more precisely $K[A]$ denotes the subgraph of $K$ induced on the vertices $A\cap V(K)$, and we define $K[B]$ analogously. Let $p\le q$ denote the orders of the partite sets of $K[A]$. Then $K[B]$ is either $K_{a-p,b-q}$ or $K_{a-q,b-p}$. Note that $p=0$ is possible and means that $K[A]$ is an independent set in $K$. However, $q=a$ is impossible.

Let $G'$ denote the subgraph of $G$ inside $B$. Observe that a vertex $v$ has degree at most $t-1$ in $G'$ because otherwise $A\cup \{v\}$ and  neighbors of $v$ would form a copy of $K_{a+1,t}$. This means that $G'$ is $K_{1,t}$-free (and in particular both partite sets of $K[B]$ have order less than $t$). $K[B]$ is a complete bipartite graph $K_{p',q'}$ with $p'\le a$, $q'\le b$, and by the above observation we have $p',q'<t$. By part (i) of Theorem \ref{a<s} we have $ex(|B|,K_{p',q'},K_{1,t})=O(n)$, therefore there are $O(n)$ copies of $K_{p',q'}$ for every $p'$ and $q'$ in $G'$. As they can be extended to copies of $K_{a,b}$ from $A$ in constant many ways, we are done with the proof of the upper bound $\cN(K_{a,b},G)\le \cN(K_{a,b},K_{a,n-a})+O(n)$.

Let us show the stronger upper bound $\cN(K_{a,b},G)\le \cN(K_{a,b},K_{a,n-a})$ in the case $b\ge a+t$. Then we have $b-q\ge t$ and $b-p\ge t$, thus one partite set of $K[B]$ has order at least $t$, a contradiction.

Let us assume that $b<a+t$ and show the lower bounds first. In the case $b\le 2a$ or $2a<t$, we can embed $\lfloor \frac{n-a}{b}\rfloor$ copies of $K_{a,b-a}$ into $B$ vertex-disjointly without creating $K_{a+1,t}$. Indeed, a complete bipartite graph can intersect at most one of these copies, and additionally use vertices of $A$. Thus $K$ is a subgraph of either $K_{2a,b-a}$ or $K_{a,b}$. The second graph does not contain $K_{a+1,t}$, and the first one contains it only if either $b-a\ge t$ (which contradicts our assumption $b<a+t$) or $2a\ge t$ and $b-a\ge a+1$. The assumption $b\le 2a$ implies that $b-a\le a<a+1$.

 Assume now that $a+b<2t-1$ and we will also use that $a>\lfloor \frac{t-1}{2}\rfloor$ (otherwise the previous construction gives the desired lower bound). Then we let  $q=\lfloor \frac{t-1}{2}\rfloor$ and $q'=\lceil \frac{t-1}{2}\rceil$. we embed $K_{a-q,q'}$ into $A$ and $\lfloor \frac{n-a}{b}\rfloor$ vertex disjoint copies of $K_{q,b-q'}$ into $B$. Any copy of $K_{a+1,t}$ in the resulting graph intersects $A$ and $B$ both in complete bipartite graphs and in particular is a subgraph of either $K_{a,b}$ or $K_{q+q',a+b-q-q'}$. Clearly $K_{a,b}$ does not contain $K_{a+1,t}$, and both partite sets of $K_{q+q',a+b-q-q'}$ have order less than $t$, a contradiction.

Let us show now the stronger upper bound $\cN(K_{a,b},G)\le \cN(K_{a,b},K_{a,n-a})$ in the remaining cases, i.e. $t\le 2a<b$ and $a+b\ge 2t-1$.
Consider first the case $p=0$.
Then $K[B]$ either contains a partite set of order $a$ or a partite set of order $b$, but this second possibility contradicts our earlier observation that both parts of $K[B]$ have order less than $t$. Thus we can assume that $K[B]$ contains $K_{a,b-a}$.
Then a copy of $K_{a,b-a}$ in $B$ extends to $K_{2a,b-a}$ with the vertices of $A$. We have $b-a>a$ and $2a\ge t$, thus $K_{a+1,t}$ is a subgraph of $G$.

Consider now the case $p>0$
and still we have $a+b\ge 2t-1$ and $b>2a$. 
If $K[B]$ is $K_{a-q,b-p}$, then we have $K_{2a-q,b-p}$ in $G$, which contains $K_{a+1,b-p}$. 
We have $3b/2>a+b\ge 2t-1$ thus $b-p\ge b-a/2>b-b/4=3b/4\ge t-1/2$, so $K_{a+1,b-p}$ and $G$ contains $K_{a+1,t}$. This contradiction finishes the proof in this case. In particular, if $p=q$, then we are done, thus we can assume that $p<q$.
If $K[B]$ is $K_{a-p,b-q}$, then we have $K_{a-p+q,p+b-q}$ in $G$. Since $a-p+q\ge a+1$, we must have $p+b-q<t$. Then $a-p+q=(a+b)-(p+b-q)\ge 2t-1-(p+b-q) \ge t$. 
We also have $p+b-q\ge b-a\ge a+1$, thus there is a $K_{a+1,t}$ in $G$, a contradiction.
\end{proof}


Observe that if $s>a+1$, then $K_{s-1,n-s+1}$ is not the extremal graph. Recall that $\overline{K}_{a,b}$ denotes the graph we obtain from $K_{a,b}$ by adding all the edges in the partite set of size $a$. With a slight abuse of notation, we still use the expression ``partite set'' for the partite sets of the original $K_{a,b}$. The graph
$\overline{K}_{s-1,n-s+1}$ is $K_{s,t}$-free, as it has only $s-1$ vertices of degree at least $t$. On the other hand, if we take $a$ vertices from the part of size $s-1$ to form an $a$-set, one more vertex from that part and $b-1$ vertices from the other part to form a $b$-set, then they are the partite sets of a $K_{a,b}$ that is not present in $K_{s-1,n-s+1}$. We restate (as a proposition) and prove Theorem \ref{a<s} (v).

\begin{prop}\label{overl}
If $a+1<s\le t< b$ and $n$ is large enough, then every extremal $n$-vertex $K_{s,t}$-free graph contains $\overline{K}_{s-1,n-s+1}$. Moreover, if an $n$-vertex $K_{s,t}$-free graph does not contain $\overline{K}_{s-1,n-s+1}$, then it has at most $ex(n,K_{a,b},K_{s,t})-\Omega(n^{b-1})$ copies of $K_{a,b}$.
\end{prop}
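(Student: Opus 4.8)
I want to prove Proposition~\ref{overl}, the stability statement for the case $a+1<s\le t<b$. The plan is to mimic the structure of the proof of Theorem~\ref{largen} (part (iii)), where the extremal graph was shown to contain $K_{s-1,n-s+1}$, but now aiming at $\overline{K}_{s-1,n-s+1}$. The first order of business is to establish the asymptotically correct count $\ex(n,K_{a,b},K_{s,t})=(1+o(1))\cN(K_{a,b},\overline{K}_{s-1,n-s+1})$, which follows from part (ii) of Theorem~\ref{a<s}: since $a<s$, the leading term is $(1+o(1))\binom{s-1}{a}\binom{n}{b}$, and $\overline{K}_{s-1,n-s+1}$ already realizes this leading term (the extra clique edges on the $s-1$ side only contribute lower-order terms when $b>1$). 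So the target graph has the right number of $K_{a,b}$ copies to leading order, and my task reduces to showing that a graph lacking $\overline{K}_{s-1,n-s+1}$ loses $\Omega(n^{b-1})$ copies.

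\medskip

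The key structural observation I would exploit is the same dichotomy as before: for a $b$-set $B$ with $t\le b$, the common neighborhood $N_G(B)$ has size at most $s-1$, so $B$ contributes at most $\binom{s-1}{a}$ copies of $K_{a,b}$ as the large partite set, with equality exactly when $|N_G(B)|=s-1$. I call such a $B$ good and otherwise bad; as in the earlier proof, it suffices to show that if $G$ avoids $\overline{K}_{s-1,n-s+1}$, then the number of bad $b$-sets is $\Omega(n^{b-1})$. I would again let $S_1,\dots,S_\ell$ be the $(s-1)$-subsets with $|N_G(S_i)|\ge b$, set $N_i=N_G(S_i)$ and $m_i=|N_i|$, and re-run Claim~\ref{klem} verbatim: its proof used only $K_{s,t}$-freeness and the fact that distinct $S_i,S_j$ have $|S_i\cup S_j|\ge s$, so it carries over unchanged. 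This gives the same bound on bad $b$-sets in terms of $\sum_i\binom{m_i}{t}\binom{n-m_i}{b-t}$ and the same three-case analysis on $m=\max_i m_i$: Cases I ($m<n/2$) and II ($n/2<m<n-C^*$) go through identically, yielding the desired $\Omega(n^{b-1})$ loss whenever the maximum common neighborhood is not almost all of $V$.

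\medskip

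The genuinely new work is Case III, $n-C^*<m\le n-s+1$, which is where the clique edges enter. Fix $i$ with $|N_i|=m$ and consider a vertex $x\notin N_i\cup S_i$. The earlier proof rewired $x$ so that $N_{G_x}(x)=S_i$, producing an ordinary $K_{s-1,\cdot}$ neighborhood and a net gain of $\Omega(n^{b-1})$ copies of $K_{a,b}$. Here the extremal comparison graph is $\overline{K}_{s-1,n-s+1}$, so I expect to need a two-step rewiring: besides attaching each leftover vertex $x$ to all of $S_i$, I must also arrange that $S_i$ itself induces a clique, since otherwise the count still falls short of $\cN(K_{a,b},\overline{K}_{s-1,n-s+1})$. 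Concretely, if some pair inside $S_i$ is a non-edge, adding that edge creates $\Omega(n^{b-1})$ new copies of $K_{a,b}$ (take the two endpoints plus $a-2$ further vertices of $S_i$ in the small part and $b$ vertices from $N_i$ in the large part — this uses $a+1<s$, so $s-1\ge a+1$ leaves enough room), while $K_{s,t}$-freeness is preserved because only $s-1$ vertices retain degree $\ge t$. I then bound, exactly as in the $O(n^{a-1})$ and $O(n^{t-1})$ estimates of the original Case III, the copies of $K_{a,b}$ destroyed by the rewiring, both of which are $o(n^{b-1})$ since $b>t$ and $b>a$. The main obstacle will be bookkeeping this combined rewiring so that the gain of $\Omega(n^{b-1})$ dominates the losses uniformly; I would handle the clique-completion and the vertex-reattachment either simultaneously or in sequence, checking at each step that $K_{s,t}$-freeness is maintained and that the net change in $\cN(K_{a,b},\cdot)$ is a positive $\Omega(n^{b-1})$, thereby forcing the extremal $G$ to have $m=n-s+1$ with a clique on $S_i$, i.e. to contain $\overline{K}_{s-1,n-s+1}$.
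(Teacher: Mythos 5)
Your plan breaks at the one point where it departs from the proof of Theorem \ref{largen}: the claim that completing $S_i$ to a clique preserves $K_{s,t}$-freeness. This is false in general, and the justification you give (``only $s-1$ vertices retain degree $\ge t$'') is also false: a vertex of $N_i$ is adjacent to all $s-1$ vertices of $S_i$ and to up to $t-1$ vertices inside $N_i$, so its degree can be $s+t-2\ge t$. Concretely, take $s=t=3$, $a=1$, $b=4$, and let $G$ be $K_{2,n-2}$ with non-adjacent small part $\{u,v\}$ together with a single $K_{2,2}$ placed inside the large part, with sides $\{w_1,w_2\}$ and $\{w_3,w_4\}$. One checks that $G$ is $K_{3,3}$-free (every vertex of the large part has at most two neighbors inside the large part, and $u,v$ are non-adjacent), yet $G+uv$ contains the $K_{3,3}$ with parts $\{u,w_1,w_2\}$ and $\{v,w_3,w_4\}$. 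In general, whenever $G[N_i]$ contains a $K_{s-1,t-1}$ --- which the maximum-degree bound $t-1$ on $G[N_i]$ permits --- adding a missing edge $uv$ of $S_i$ creates a $K_{s,t}$: put $u$ with the $(s-1)$-side and $v$ with the $(t-1)$-side. So the graph produced by your clique-completion need not be $K_{s,t}$-free, and the comparison with $\ex(n,K_{a,b},K_{s,t})$ collapses. A smaller slip in the same step: the copies you describe as new (both endpoints of the added edge in the $a$-side) do not use the edge $uv$ at all, hence are not new; the new copies are those with $v$ in the $a$-side and $u$ in the $b$-side, which is where the hypothesis $s-1\ge a+1$ is genuinely needed.

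The repair is to not add edges to $G$ at all, which is what the paper does --- and doing so also lets you cite Theorem \ref{largen} as a black box instead of re-running its case analysis. Since the extremal $G$ contains $K_{s-1,n-s+1}$, the induced subgraph $G'$ on the large part is $K_{1,t}$-free, so any copy of $K_{a,b}$ using an edge inside the large part meets it in some $K_{p,q}$ with $p,q\le t-1$; by Proposition \ref{gemevi} there are only $O(n)$ such $K_{p,q}$, each extendable to $K_{a,b}$ in $O(1)$ ways, so only $O(n)$ copies of $K_{a,b}$ use an edge inside the large part. All remaining copies live in the subgraph of $\overline{K}_{s-1,n-s+1}$ obtained by deleting the small-part edges missing from $G$, and each missing edge $uv$ destroys the $\binom{s-3}{a-1}\binom{n-s+1}{b-1}=\Omega(n^{b-1})$ copies of $\overline{K}_{s-1,n-s+1}$ that have $v$ in the $a$-side and $u$ in the $b$-side. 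Hence a graph containing $K_{s-1,n-s+1}$ but missing a small-part edge satisfies $\cN(K_{a,b},G)\le \cN(K_{a,b},\overline{K}_{s-1,n-s+1})-\Omega(n^{b-1})+O(n)$, which proves both assertions at once. (If you insist on a modification argument, you must first delete all edges inside $N_i$ --- costing only $O(n)$ copies by the same application of Proposition \ref{gemevi} --- and only then complete $S_i$ to a clique; the result is a subgraph of $\overline{K}_{s-1,n-s+1}$ and hence $K_{s,t}$-free. Note also that your Cases I and II do not go through ``identically'': the benchmark $\cN(K_{a,b},\overline{K}_{s-1,n-s+1})$ exceeds $\cN(K_{a,b},K_{s-1,n-s+1})$ by $\Theta(n^{b-1})$, so Case II needs a larger choice of $C^*$, while Case I survives because its deficit is in fact $\Omega(n^b)$.)
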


\begin{proof} 
Let $u$ and $v$ be vertices in the smaller partite set in $\overline{K}_{s-1,n-s+1}$. We consider first the number of copies of $K_{a,b}$ containing the edge $uv$. We can take $a-1$ other vertices from the smaller partite set and $b-1$ vertices from the larger partite set to form a $K_{a,b}$ with them, thus there are $\Omega(n^{b-1})$ such copies of $K_{a,b}$.

Let $G$ be an extremal $K_{s,t}$-free graph on $n$ vertices, then by Theorem \ref{largen}, $G$ contains a copy of $K_{s-1,n-s+1}$. Let $G'$ be the subgraph of $G$ on the larger partite set. The maximum degree is at most $t-1$ in $G'$ because of the $K_{s,t}$-free property. We consider the copies of $K_{a,b}$ that contain an edge inside the larger partite set. The subgraph of that $K_{a,b}$ inside the larger partite set is a complete bipartite graph $K_{p,q}$ for some $p,q\le t-1$. By Proposition \ref{gemevi}, the number of copies of $K_{p,q}$ in $G'$  is $O(n)$, since $G'$ is a $K_{1,t}$-free graph. Each copy of $K_{p,q}$ can be extended to a copy of $K_{a,b}$ with vertices from the smaller partite set in constantly many ways.

Therefore, the total number of copies of $K_{a,b}$ containing an edge from the larger partite set is $O(n)=o(n^{b-1})$. If any edge is missing in $G$ from the smaller partite set, we lose $\Omega(n^{b-1})$ copies of $K_{a,b}$, thus $G$ has less copies of $K_{a,b}$ than $\overline{K}_{s-1,n-s+1}$, a contradiction.
\end{proof}

It is easy to characterize the $K_{s,t}$-free graphs containing $\overline{K}_{s-1,n-s+1}$: they are those graphs $G$ that have in the larger partite set a subgraph $G'$ which does not contain any $K_{p,q}$ with $p+q=t+1$, $p\le q>t-s$. Indeed, such a $K_{p,q}$ could be extended  by adding $s-p$ and $t-q$ vertices from the smaller partite set of $\overline{K}_{s-1,n-s+1}$ to its partite sets, altogether $s-p+t-q=s-1$ vertices. If, on the other hand, $G$ contains a $K_{s,t}$, it intersects the larger partite set in a complete bipartite graph $K_{p',q}$ with $p'\le q$. Then $p'+q\ge t+1$ and $q\ge t-s$. Let $p=t+1-q$, then deleting $p'+q-t-1$ vertices from the partite set of $K_{p',q}$ of size $p'$ we obtain $K_{p,q}$ with the desired properties. We are ready to restate and prove Theorem \ref{a<s} (vi).

\begin{prop}
Let $a+1<s\le t< b$ and $n$ is large enough. If $a+b\ge s+t$, then $\ex(n,K_{a,b},K_{s,t})=\cN(K_{a,b},\overline{K}_{s-1,n-s+1})$. If $a+b<s+t$, then $\ex(n,K_{a,b},K_{s,t})=\cN(K_{a,b},\overline{K}_{s-1,n-s+1})+\Theta(n)$.
\end{prop}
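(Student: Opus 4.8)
The plan is to invoke Proposition~\ref{overl}: for $n$ large, every extremal graph $H$ contains $\overline{K}_{s-1,n-s+1}$, so I may write $H=\overline{K}_{s-1,n-s+1}+G'$, where $S$ denotes the clique of $s-1$ dominating vertices, $L$ the remaining independent set, and $G'=H[L]$. The paragraph preceding the statement characterizes the admissible $G'$; in particular $G'$ has maximum degree at most $t-1$. I would first split the copies of $K_{a,b}$ in $H$ into those using no edge inside $L$ and those using at least one such edge. The former are exactly the copies inside $\overline{K}_{s-1,n-s+1}$, so their number equals $\cN(K_{a,b},\overline{K}_{s-1,n-s+1})$ independently of $G'$; let $E$ denote the number of the latter. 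Since $G'$ has bounded degree, each connected $K_{x,y}$ with $x,y\ge 1$ has only $O(n)$ copies in $G'$, and each extends to a $K_{a,b}$ using $S$ in $O(1)$ ways, so $E=O(n)$ for every admissible $G'$; this already yields $\ex(n,K_{a,b},K_{s,t})\le \cN(K_{a,b},\overline{K}_{s-1,n-s+1})+O(n)$.

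The heart of the argument is a dichotomy. Consider a copy of $K_{a,b}$ using an edge of $L$: its trace on $L$ is a complete bipartite graph $K_{x,y}\subseteq G'$ with $x$ vertices on the $a$-side and $y$ on the $b$-side, $x,y\ge 1$; the remaining $a-x$ and $b-y$ vertices lie in $S$, forcing $(a-x)+(b-y)\le s-1$, i.e.\ $x+y\ge a+b-s+1$. On the other hand $x\le a<s$, and I claim $x+y\le t$: if $x+y\ge t+1$, then adjoining $s-x$ vertices of $S$ to the $x$-side and $t-y$ to the $y$-side (using $(s-x)+(t-y)=s+t-(x+y)\le s-1$ vertices of $S$) produces a $K_{s,t}$ in $H$, contradicting admissibility. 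Comparing the two bounds: if $a+b\ge s+t$ then $a+b-s+1>t$, so no such copy can exist, every admissible $H$ has $E=0$, and hence $\ex(n,K_{a,b},K_{s,t})=\cN(K_{a,b},\overline{K}_{s-1,n-s+1})$.

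When $a+b<s+t$ I would exhibit a matching lower bound by taking $G'$ to be $\lfloor (n-s+1)/t\rfloor$ vertex-disjoint copies of $K_{a,t-a}$ placed on $L$. Every complete bipartite subgraph $K_{x,y}$ of this $G'$ has smaller side at most $a<s$ and $x+y\le t$, so running the computation above in reverse shows that no such $K_{x,y}$ extends through $S$ to a $K_{s,t}$, whence $H$ is $K_{s,t}$-free. Each copy of $K_{a,t-a}$ extends to a $K_{a,b}$ by taking its $a$-side (lying in $L$) together with $b-(t-a)=a+b-t\le s-1$ vertices of $S$ on the $b$-side, and every such copy uses an $L$-edge; this produces $\Theta(n)$ copies, so $E=\Theta(n)$ here, and with the upper bound $E=O(n)$ we get $\ex(n,K_{a,b},K_{s,t})=\cN(K_{a,b},\overline{K}_{s-1,n-s+1})+\Theta(n)$. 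The main obstacle is the careful book-keeping of the two directions of the key sub-lemma---that a complete bipartite $K_{x,y}\subseteq G'$ with smaller side below $s$ is admissible precisely when $x+y\le t$---and checking that the extended copies are genuinely new and nonempty in number, which is exactly where the hypotheses $a+1<s\le t<b$ enter (they guarantee $a<s$, $t-a\ge 1$, and $0\le a+b-t\le s-1$).
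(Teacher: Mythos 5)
Your proposal is correct and takes essentially the same route as the paper's proof: both start from Proposition~\ref{overl}, split the copies of $K_{a,b}$ according to whether they use an edge inside the independent part $L$, use the same trace-plus-extension argument (a trace $K_{x,y}\subseteq G'$ must satisfy $x+y\ge a+b-s+1$, while $K_{s,t}$-freeness forces $x+y\le t$, killing all extra copies when $a+b\ge s+t$), and get the $O(n)$ upper bound from the bounded degree of $G'$ (the paper cites Proposition~\ref{gemevi} for the same purpose). The only cosmetic difference is the lower-bound construction when $a+b<s+t$: you plant vertex-disjoint copies of $K_{a,t-a}$ in $L$, whereas the paper plants copies of $K_{p,q}$ with $p+q=a+b-s+1$; both verify $K_{s,t}$-freeness by the identical trace argument and yield $\Theta(n)$ new copies.
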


\begin{proof}
Let $G$ be an extremal $K_{s,t}$-free graph on $n$ vertices.
By Proposition \ref{overl}, $G$ contains $\overline{K}_{s-1,n-s+1}$ with smaller part $A$ and larger part $B$. Assume $G$ contains a copy of $K_{a,b}$ that is not contained in $\overline{K}_{s-1,n-s+1}$. This copy intersects $B$ in a complete bipartite graph $K_{p,q}$ with $p\le q$. Then we have $p+q+s-1\ge a+b$, because these $p+q$ vertices can be extended with vertices from $A$ to $K_{a,b}$. Also we have $p\le a$. 

We can add $s-p$ vertices of $A$ to the smaller partite set of $K_{p,q}$ and the remaining $p-1$ vertices to the larger partite set to obtain a copy of $K_{s,q+p-1}$. 
If $a+b\ge s+t$, then $p+q-1\ge a+b-s\le t$, a contradiction finishing the proof of the first part of the statement.

Assume now that $a+b<s+t$. For the upper bound, let $G'$ denote the subgraph of $G$ inside the larger partite set. Observe that a vertex $v$ has degree at most $t-1$ in $G'$ because otherwise the smaller partite set would extend $v$ and its neighbors to $K_{s,t}$. This means that $G'$ is $K_{1,t}$-free. Every copy of $K_{a,b}$ that is not present in $\overline{K}_{s-1,n-s+1}$ intersects $G'$ in a complete bipartite set $K_{p,q}$ with $p\le a$, $q\le b$. By Proposition \ref{gemevi}, there are $O(n)$ copies of $K_{p,q}$ for every $p$ and $q$ in $G'$. As they can be extended from $A$ constant many ways, we are done with the upper bound.

For the lower bound, we pick $p \le q$ arbitrarily with $p+q=a+b-s+1$. We place $\lfloor (n-s+1)/(p+q)\rfloor$ vertex-disjoint copies of $K_{p,q}$ into the larger partite set $B$ of a $\overline{K}_{s-1,n-s+1}$. Any copy of $K_{s,t}$ in the resulting graph $G'$ intersects this partite set in a complete bipartite graph on at least $t+1>a+b-s+1$ vertices, which is impossible, thus $G'$ is $K_{s,t}$-free. In addition to the copies of $K_{a,b}$ in $\overline{K}_{s-1,n-s+1}$, $G'$ contains those copies that have one of the $\Theta(n)$ copies of $K_{p,q}$ from $B$, and from the set of $s-1$ vertices of degree $n-1$, $a-p$ vertices are added to the smaller partite set and $b-q=p-a+s-1$ vertices to the larger partite set, to form a $K_{a,b}$. As $a-p+b-q=s-1$, this is doable, finishing the proof.
\end{proof}

Let $G_0$ be an almost $(t-1)$-regular graph on $n-s+1$ vertices with girth at least 5. It is well-known that such a $G_0$ exists if $n-s+1$ is large enough. $G_0$ obviously avoids any $K_{p,q}$ with $p+q=t+1$: it avoids $K_{1,t}$ by the degree condition, and it avoids $K_{2,2}$ by the girth condition.
Let us add $G_0$ to $\overline{K}_{s-1,n-s+1}$ to obtain $G_1$. As $G_0$ is not uniquely determined, neither is $G_1$, but their degree sequence is, which in turn determines $\cN(K_{1,b},G_1)$. We end this subsection by restating and proving part (vii) of Theorem \ref{a<s}.

\begin{prop}\label{starr}
If $s\le t< b$ and $n$ is large enough, then $\ex(n,K_{1,b},K_{s,t})=\cN(K_{1,b},G_1)$.
\end{prop}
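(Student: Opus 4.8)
The plan is to prove matching lower and upper bounds, the whole point being that a copy of $K_{1,b}$ is just a vertex together with a choice of $b$ of its neighbours, so that $\cN(K_{1,b},G)=\sum_{v}\binom{d_G(v)}{b}$ depends only on the degree sequence of $G$. The lower bound is immediate: the discussion preceding the statement already verifies that $G_1$ is $K_{s,t}$-free (its only vertices of degree at least $t$ are the $s-1$ vertices of the small part, and $G_0$ avoids every $K_{p,q}$ with $p+q=t+1$), so $\ex(n,K_{1,b},K_{s,t})\ge \cN(K_{1,b},G_1)$. Everything else is the reverse inequality $\cN(K_{1,b},G)\le \cN(K_{1,b},G_1)$ for an extremal graph $G$.

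First I would isolate the structure of $G$. If $s=1$ the statement is trivial, since $K_{1,t}$-freeness forces $\Delta(G)\le t-1<b$ and hence $\cN(K_{1,b},G)=0=\cN(K_{1,b},G_1)$. For $s\ge 3$, Proposition \ref{overl} applies with $a=1$ (so $a+1=2<s$) and shows that an extremal $G$ contains $\overline{K}_{s-1,n-s+1}$; for $s=2$ the same conclusion holds with $\overline{K}_{1,n-1}=K_{1,n-1}$, which is provided by Theorem \ref{largen} (with $a=1<s=2$). Write $A$ for the set of $s-1$ vertices of the small part and $B$ for the remaining $n-s+1$ vertices, and put $G'=G[B]$.

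Next I would split the count according to $A$ and $B$. Each $v\in A$ is adjacent to all other $n-1$ vertices, so $d_G(v)=n-1$ and $\sum_{v\in A}\binom{d_G(v)}{b}=(s-1)\binom{n-1}{b}$, which is exactly the contribution of the apices in $G_1$. For $v\in B$ one has $d_G(v)=(s-1)+d_{G'}(v)$, and $d_{G'}(v)\le t-1$: otherwise $t$ neighbours of $v$ inside $B$ together with $A\cup\{v\}$ would form a copy of $K_{s,t}$. Hence the remaining contribution equals $\sum_{v\in B}\binom{(s-1)+d_{G'}(v)}{b}$ with every $d_{G'}(v)\le t-1$.

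The remaining and main step is to show that among all graphs on $B$ with maximum degree at most $t-1$ the almost $(t-1)$-regular graph $G_0$ maximizes $\sum_{v\in B}\binom{(s-1)+d(v)}{b}$. Since $d\mapsto\binom{(s-1)+d}{b}$ is increasing, each summand is at most $\binom{s+t-2}{b}$, so this contribution is at most $(n-s+1)\binom{s+t-2}{b}$, with equality only when $G'$ is $(t-1)$-regular. If $(t-1)(n-s+1)$ is even, a $(t-1)$-regular girth-$5$ graph $G_0$ can be chosen and its contribution meets this bound. If $(t-1)(n-s+1)$ is odd, no graph on $B$ is $(t-1)$-regular, so both $G'$ and $G_0$ must have a vertex of degree at most $t-2$; consequently the contribution of $G$ over $B$ is at most $(n-s)\binom{s+t-2}{b}+\binom{s+t-3}{b}$, which is exactly that of an almost $(t-1)$-regular $G_0$ with one vertex of degree $t-2$. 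The parity bookkeeping of this last case — confirming that the degree sequence of $G_0$ is genuinely optimal and that the girth-$5$ construction realizes precisely that sequence — is the only delicate point; granting it, adding the $A$- and $B$-contributions gives $\cN(K_{1,b},G)\le \cN(K_{1,b},G_1)$, and hence equality.
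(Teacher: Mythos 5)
Your proof is correct and follows essentially the same route as the paper's: obtain the spanning $\overline{K}_{s-1,n-s+1}$ in an extremal graph, observe that $\cN(K_{1,b},G)=\sum_{v}\binom{d_G(v)}{b}$ depends only on the degree sequence, bound the degree of every vertex of the large part by $s+t-2$ (with the parity correction when $(t-1)(n-s+1)$ is odd), and conclude that $G_1$ realizes the optimal degree sequence. You are in fact slightly more careful than the paper, which invokes Proposition \ref{overl} directly even though that proposition assumes $a+1<s$ (i.e.\ $s\ge 3$ when $a=1$); your separate treatment of $s=1$ (trivial) and $s=2$ (via Theorem \ref{largen}, noting $\overline{K}_{1,n-1}=K_{1,n-1}$) closes that small gap.
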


\begin{proof}
By Proposition \ref{overl}, the extremal graph $G$ contains a copy of $\overline{K}_{s-1,n-s+1}$. The vertices of the larger part are connected to at most $t-1$ other vertices of the larger part, thus have degree at most $s+t-2$. Moreover, if $n-s+1$ and $t-1$ are both odd, than one of the vertices of the large part is connected to at most $t-2$ other vertices of the larger part, thus has degree at most $s+t-3$. Therefore, $G$ contains $s-1$ vertices of degree at most $n-1$, and depending on parity, either $n-s+1$ vertices of degree at most $s+t-2$, or $n-s$ vertices of degree at most $s+t-2$ and one vertex of degree at most $s+t-3$. As $G_1$ has the same degree sequence with equality everywhere, we are done.
\end{proof}

\subsection{$a<s$, $b<s$}
In this case
Alon and Shikhelman \cite{as} showed the upper bound $O(n^{a+b-ab/s})$. They also showed a matching lower bound in the case\footnote{We note that the conditions are incorrectly stated in their paper \cite{as}.} $a\le b<(s+1)/2$ and $t\ge (s-1)!+1$.
Ma, Yuan, Zhang \cite{myz} extended their result with Theorem \ref{kinai} by getting rid of the condition $b<(s+1)/2$ at the cost of increasing $t$ even further: if $a<s$, $b<s$ and $t$ is large enough, then $\ex(n,K_{a,b},K_{s,t})=\Theta(n^{a+b-ab/s})$. 

Bayer, M\'esz\'aros, R\'onyai and Szab\'o \cite{bmrsz} also showed $\ex(n,K_{a,b},K_{s,t})=\Theta(n^{a+b-ab/s})$ in the case $a\le 3$, $a\le b<s$, $s\ge 4$ and $t\ge (s-1)!+1$. Moreover, their results also determine the order of magnitude in some other cases, even though they do not explicitly state them. In particular, if $a=b=2$, $s=3$, then for $t\ge (s-1)!+1=3$, they showed $\ex(n,K_{2,2},K_{3,t})=\Theta(n^{8/3})$.


Observe that we are in the case containing the ordinary Tur\'an problem for $K_{s,t}$, and indeed the results are similar too: the simple upper bound can be matched by a lower bound in the order of magnitude in the case $t$ is large. In other cases, the best known lower bound on $\ex(n,K_{s,t})$ is $\Omega(n^{2-\frac{1}{s}-\frac{1}{t}})$ by a simple random construction. Theorem \ref{randomconstr} extends that construction to generalized Tur\'an problems and gives $\ex(n,K_{a,b},K_{s,t})=\Omega(n^{a+b-\frac{ab(s+t-2)}{st-ab}})$.


\section{$a=s$}

In the case $a=s$, we can again assume that $b<t$, as otherwise $\ex(n,K_{a,b},K_{s,t})=0$. The simple upper bound $O(n^s)$ from Proposition \ref{gemevi} also holds in this case, and it is sharp if $t$ is large enough by Theorem \ref{kinai}.

If $s=1$, the problem is trivial, the almost $(t-1)$-regular graphs are the extremal graphs.
In the case $s=2$, it is not hard to obtain an asymptotically sharp bound. We use the well-known F\"uredi graph \cite{fur} mentioned in the introduction. Recall that it is a $K_{2,t}$-free graph that gives the asymptotic of $\ex(n,K_{2,t})$, and all but $o(n^2)$ pairs of vertices have exactly $t-1$ common neighbors. Gerbner and Palmer \cite{GP2017} showed that among $K_{2,t}$-free graphs, the F\"uredi graph contains asymptotically the most copies of paths and cycles of any length, thus in particular the most copies of $K_{2,2}$. We can extend this to $K_{2,b}$. 

\begin{prop} If $b>2$, then
$\ex(n,K_{2,b},K_{2,t})=(1+o(1))\binom{t-1}{b}\binom{n}{2}$.
\end{prop}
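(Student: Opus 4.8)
The plan is to reduce the whole statement to counting, for each pair of vertices, the $b$-subsets of its common neighbourhood. The crucial observation is that because $b>2$, the two partite classes of $K_{2,b}$ have different sizes, so within any copy the size-$2$ class consists exactly of the two vertices of degree $b$ and is therefore uniquely determined by the copy. Hence for an \emph{arbitrary} graph $G$ on $n$ vertices one has the exact identity
$$\cN(K_{2,b},G)=\sum_{\{u,v\}}\binom{|N_G(\{u,v\})|}{b},$$
the sum ranging over all $\binom{n}{2}$ pairs of vertices: each pair $\{u,v\}$ together with a $b$-subset of $N_G(\{u,v\})$ determines a distinct copy, and every copy arises exactly once. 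If $G$ is $K_{2,t}$-free, then $|N_G(\{u,v\})|\le t-1$ for every pair, so each summand is at most $\binom{t-1}{b}$ and we obtain the upper bound $\cN(K_{2,b},G)\le\binom{t-1}{b}\binom{n}{2}$, which already matches the claimed value exactly.

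For the matching lower bound I would take $G$ to be the F\"uredi graph on $n$ vertices, which is $K_{2,t}$-free and has the property recalled in the introduction that all but $o(n^2)$ pairs of vertices have exactly $t-1$ common neighbours. Feeding this into the identity above, each of the $(1-o(1))\binom{n}{2}$ such pairs contributes exactly $\binom{t-1}{b}$ (and $b\le t-1$, so this binomial is nonzero), while the remaining $o(n^2)$ pairs contribute a nonnegative amount. Thus the F\"uredi graph contains at least $(1-o(1))\binom{t-1}{b}\binom{n}{2}$ copies of $K_{2,b}$, and combining this with the upper bound yields $\ex(n,K_{2,b},K_{2,t})=(1+o(1))\binom{t-1}{b}\binom{n}{2}$.

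The only genuinely delicate point, and hence the step I would treat most carefully, is the exactness of the counting identity, since this is precisely where the hypothesis $b>2$ enters: for $b=2$ a copy of $K_{2,2}$ is a $4$-cycle whose vertex set splits into a size-$2$ class in two distinct ways, so each such copy would be counted twice and the clean identity fails. This explains why $b=2$ (handled separately by Gerbner and Palmer) is excluded. I would also be careful to invoke the F\"uredi graph only through the two quoted properties, its $K_{2,t}$-freeness and the near-regular common-neighbour count, using nothing about its algebraic construction. With these checks in place the argument is complete and no further technical obstacle remains.
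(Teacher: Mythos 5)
Your proof is correct and follows essentially the same route as the paper: the upper bound by choosing the pair in the size-$2$ class and then $b$ of its at most $t-1$ common neighbours, and the lower bound from the F\"uredi graph via the $(1-o(1))\binom{n}{2}$ pairs with exactly $t-1$ common neighbours. Your explicit justification that $b>2$ makes the size-$2$ class uniquely determined (and hence the counting exact) is a point the paper leaves implicit, but it is the same argument.
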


\begin{proof}
Let $G$ be a $K_{2,t}$-free graph on $n$ vertices. We count the copies of $K_{2,b}$ by picking the two vertices in the smaller part. This can be done $\binom{n}{2}$ ways, and then we pick $b$ of their at most $t-1$ common neighbors.

For the lower bound, observe that in the F\"uredi graph each of the $(1-o(1))\binom{n}{2}$ pairs that have exactly $t-1$ common neighbors forms the smaller part of $\binom{t-1}{b}$ copies of $K_{2,b}$.
\end{proof}

Bayer, M\'esz\'aros, R\'onyai and Szab\'o \cite{bmrsz} showed $\ex(n,K_{4,6},K_{4,7})=\Omega(n^{7/4})$.
For other values of $s$, we again use Theorem \ref{randomconstr}. Here $ab<st$ is always satisfied, and we have $\ex(n,K_{a,b},K_{s,t})=\Omega(n^{a+b-\frac{ab(s+t-2)}{st-ab}})=\Omega(n^{a+b-\frac{b(a+t-2)}{t-b}})$. If $t$ increases, the exponent goes to $a$.


\section{$a>s$}

In the case $a>s$, we can assume that $b<t$, as otherwise $\ex(n,K_{a,b},K_{s,t})=0$. Recall the simple upper bound $O(n^s)$ from Proposition \ref{gemevi}. We first consider the case $s=1$. We start with restating and proving Theorem \ref{a<b} (i).

\begin{prop}\label{elso} For any $a< b< t$ and $n$ we have $ex(n,K_{a,b},K_{1,t})\le \frac{n}{a+b}(\binom{t-1}{a}\binom{t-1}{b-1}+\binom{t-1}{b}\binom{t-1}{a-1})$, while for any $a<t$ and $n$ we have $ex(n,K_{a,a},K_{1,t})\le \frac{n}{2a}\binom{t-1}{a}\binom{t-1}{a-1}$. In particular, if $2t-2$ divides $n$, we have
$\ex(n,K_{a,b},K_{1,t})=\cN(K_{a,b},\frac{n}{2t-2}K_{t-1,t-1})$.
\end{prop}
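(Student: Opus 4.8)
The first step is to translate the forbidden subgraph: a graph $G$ is $K_{1,t}$-free precisely when $\Delta(G)\le t-1$, so throughout we may work with graphs of maximum degree at most $t-1$. For the two displayed upper bounds I would use a direct vertex-incidence count. A copy of $K_{a,b}$ has $a+b$ vertices, and when $a<b$ its two parts are determined, so summing over all vertices $v$ the number of copies through $v$ gives exactly $(a+b)\cN(K_{a,b},G)$. If $v$ lies in the part of size $a$, its $b$ partners lie in $N(v)$ (at most $\binom{t-1}{b}$ choices) and the remaining $a-1$ vertices of its part lie in the neighbourhood of any one fixed partner (at most $\binom{t-1}{a-1}$ choices); the symmetric estimate applies when $v$ lies in the part of size $b$. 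Summing these yields the first inequality, and when $a=b$ the two roles coincide, so the factor $a+b$ is replaced by $2a$ and one obtains the second inequality.

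The substance of the statement is the exact evaluation when $2t-2\mid n$, and here the displayed bounds do not suffice on their own: they are genuinely not tight (already for $K_{2,2}$ with $t=3$ the second bound gives $n/2$ while the truth is $n/4$), so I would argue separately and sharply. The plan is to count copies through their common neighbourhoods. Writing $N(A)=\bigcap_{v\in A}N(v)$, for $a<b$ every copy of $K_{a,b}$ is determined by its $a$-part $A$ together with a $b$-subset of $N(A)$, whence
\[
\cN(K_{a,b},G)=\sum_{|A|=a}\binom{|N(A)|}{b},
\]
and the same sum equals $2\cN(K_{a,a},G)$ when $a=b$, since then the two equal-size parts are interchangeable.

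I would then combine two ingredients. First, $|N(A)|\le t-1$ for every $a$-set $A$ (any $v\in A$ bounds it), together with the key inequality $\binom{x}{b}\le \frac{x}{t-1}\binom{t-1}{b}$ valid for $0\le x\le t-1$; this follows at once from $\tfrac1x\binom{x}{b}=\tfrac1b\binom{x-1}{b-1}$ being nondecreasing in $x$. Second, the double count $\sum_{|A|=a}|N(A)|=\sum_{w}\binom{\deg w}{a}\le n\binom{t-1}{a}$, obtained by counting pairs $(A,w)$ with $A\subseteq N(w)$. Applying the inequality termwise to $x=|N(A)|$ and then substituting the double count gives
\[
\cN(K_{a,b},G)\le \frac{\binom{t-1}{b}}{t-1}\sum_{w}\binom{\deg w}{a}\le \frac{n}{t-1}\binom{t-1}{a}\binom{t-1}{b},
\]
with an extra factor $\tfrac12$ in the case $a=b$.

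Finally I would verify that $\frac{n}{2t-2}K_{t-1,t-1}$ attains this bound. Each component $K_{t-1,t-1}$ is $(t-1)$-regular, hence $K_{1,t}$-free, and any $a$-set lying in one side has common neighbourhood exactly the opposite side, so $\cN(K_{a,b},K_{t-1,t-1})=2\binom{t-1}{a}\binom{t-1}{b}$ (respectively $\binom{t-1}{a}^2$ when $a=b$); summing over the $\frac{n}{2t-2}$ components matches the bound exactly. The equality analysis is transparent here: both inequalities are tight precisely when all degrees equal $t-1$ and every $|N(A)|\in\{0,t-1\}$, which is exactly the disjoint-biclique structure, so that no other extremal graph can appear. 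The main obstacle is recognizing that the stated upper bounds cannot by themselves pin down the exact value, so it is the common-neighbourhood identity together with the chord inequality $\binom{x}{b}\le\frac{x}{t-1}\binom{t-1}{b}$ that does the real work; the remaining steps are routine bookkeeping, with only the factor $2$ arising in the $a=b$ case requiring a little care.
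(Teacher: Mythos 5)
Your proof is correct, and for the exact part of the statement it takes a genuinely different route from the paper. The paper proves everything with the local count you give in your first paragraph (at most $\binom{t-1}{b}\binom{t-1}{a-1}+\binom{t-1}{a}\binom{t-1}{b-1}$ copies through each vertex, divided by $a+b$) and then asserts that every vertex of $\frac{n}{2t-2}K_{t-1,t-1}$ lies in exactly that many copies, which is how it concludes the equality when $2t-2\mid n$. You are right that this assertion is off: in $K_{t-1,t-1}$ a vertex $v$ lies in $\binom{t-1}{b}\binom{t-2}{a-1}+\binom{t-1}{a}\binom{t-2}{b-1}$ copies, since the side of $v$ contains only $t-2$ vertices besides $v$; this is strictly smaller than the paper's per-vertex bound as soon as $\max(a,b)\ge 2$, and your example $a=b=2$, $t=3$ (bound $n/2$ versus truth $n/4$) pinpoints exactly this slack. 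The paper's local argument is easily repaired -- the $a-1$ remaining vertices of $v$'s part are common neighbours of the chosen partners and hence lie in $N(w)\setminus\{v\}$ for a fixed partner $w$, giving $\binom{t-2}{a-1}$ rather than $\binom{t-1}{a-1}$ -- and with that correction the per-vertex count is attained in $K_{t-1,t-1}$ and $\frac{n}{a+b}\left(\binom{t-1}{b}\binom{t-2}{a-1}+\binom{t-1}{a}\binom{t-2}{b-1}\right)=\frac{n}{t-1}\binom{t-1}{a}\binom{t-1}{b}$, which is precisely the sharp value you obtain. Your global alternative -- the identity $\cN(K_{a,b},G)=\sum_{|A|=a}\binom{|N(A)|}{b}$, the chord inequality $\binom{x}{b}\le\frac{x}{t-1}\binom{t-1}{b}$ for $0\le x\le t-1$, and the double count $\sum_{|A|=a}|N(A)|=\sum_w\binom{\deg w}{a}\le n\binom{t-1}{a}$ -- reaches the same sharp bound (halved when $a=b$), implies the weaker displayed inequalities of the proposition, and makes the equality analysis transparent in a way the vertex-local count does not. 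One caveat: your closing uniqueness claim is false for $a=1$, where tightness forces only $(t-1)$-regularity, and indeed any $(t-1)$-regular $K_{1,t}$-free graph is extremal (for $t=3$, $b=2$, the cycle $C_n$ has $n=\cN\bigl(K_{1,2},\frac{n}{4}K_{2,2}\bigr)$ copies of $K_{1,2}$); the implication from your tightness conditions to the disjoint-biclique structure is valid only for $a\ge 2$, where two vertices with a common neighbour must have identical $(t-1)$-element neighbourhoods. Since the proposition asserts only the numerical equality, this overreach does not affect the correctness of your proof of the stated result.
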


\begin{proof}
Let $G$ be a $K_{1,t}$-free graph and $v$ be a vertex of $G$. Let us count the number of copies of $K_{a,b}$ containing $v$. We start with those copies where $v$ is in the part of size $a$. We have to choose $b$ neighbors of $v$, at most $\binom{t-1}{b}$ ways, and then $a-1$ neighbors of those vertices, at most $\binom{t-1}{a-1}$ ways. If $b\neq a$, we need to add $\binom{t-1}{a}\binom{t-1}{b-1}$ for the copies of $K_{a,b}$ that have $v$ in the part of size $b$. Observe that in the graph $\frac{n}{2t-2}K_{t-1,t-1}$, every vertex is in this many copies of $K_{a,b}$, finishing the proof.
\end{proof}

Let us discuss briefly the case $2t-2$ does not divide $n$. Assume that in the above proof, a vertex is contained in 
$\binom{t-1}{b}\binom{t-1}{a-1}+\binom{t-1}{a}\binom{t-1}{b-1}$ copies of $K_{a,b}$ if $a\neq b$, or $\binom{t-1}{a}\binom{t-1}{a-1}$ copies of $K_{a,b}$ if $a=b$. Then $G$ contains a $K_{t-1,t-1}$. If on the other hand every vertex is contained in less copies of $K_{a,b}$, and $n$ is large enough compared to $a,b,t$, then $G$ contains less copies of $K_{a,b}$ than $\lfloor \frac{n}{2t-2}\rfloor K_{t-1,t-1}$, thus cannot be extremal. This shows that the extremal graph consists of vertex disjoint copies of $K_{t-1,t-1}$, and a subgraph of order $c$ for some constant $c$ that depends on $a$, $b$ and $t$.

We show how to improve this with a more involved calculation in some cases, to obtain exact results. For simplicity, we will deal only with the case $a=b$.
In the case $t<4$, the only meaningful case is $t=3$, $a=2$, which is trivial, as every copy of $K_{2,2}$ is a connected component in a $K_{1,3}$-free graph.
We restate and prove Theorem \ref{a<b} (ii).

\begin{prop}
If $a<t\le \frac{8a}{7}+1$, then $\ex(n,K_{a,a},K_{1,t})=\cN(K_{a,a},\lfloor \frac{n}{2t-2}\rfloor K_{t-1,t-1}+K_{\lfloor \frac{p}{2}\rfloor,\lceil \frac{p}{2}\rceil})$, where $p=n-(2t-2)\lfloor \frac{n}{2t-2}\rfloor$.
\end{prop}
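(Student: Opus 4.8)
The plan is to feed the structural information already available into a finite optimization over complete bipartite ``building blocks''. By the paragraph following Proposition~\ref{elso}, for large $n$ every extremal graph is a vertex-disjoint union of copies of $K_{t-1,t-1}$ together with a leftover subgraph of bounded order. So I may write an extremal $G$ as $k$ copies of $K_{t-1,t-1}$ together with a $K_{1,t}$-free graph $H$ on $m$ vertices, where $n=k(2t-2)+m$ and $m$ is bounded by a constant. Since $K_{a,a}$ is connected, each of its copies lies inside a single component, whence $\cN(K_{a,a},G)=k\binom{t-1}{a}^2+\cN(K_{a,a},H)$. Writing $g(m):=\ex(m,K_{a,a},K_{1,t})$, the task becomes: among all pairs $(k,m)$ with $k(2t-2)+m=n$ and $m\ge 0$, maximize $k\binom{t-1}{a}^2+g(m)$, and then identify the maximizer with $k=\lfloor n/(2t-2)\rfloor$ and $H=K_{\lfloor p/2\rfloor,\lceil p/2\rceil}$.

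The single consequence of the hypothesis that drives everything is that $a<t\le \tfrac{8a}{7}+1$ forces $2a>t-1$ (recall $a\ge 2$ here, the remaining cases being trivial or excluded). Hence two independent classes each of size at least $a$ cannot be completely joined without creating a vertex of degree at least $2a>t-1$; in particular every $K_{1,t}$-free complete multipartite graph with at least three nonempty parts, and every $K_{1,t}$-free complete graph, is $K_{a,a}$-free. The same inequality gives that whenever $m\le 2t-2$ we have $m\le \tfrac{16a}{7}<4a$, so each copy of $K_{a,a}$ already occupies more than half of the $m$ vertices and any two copies overlap. These facts let me restrict attention to complete bipartite blocks $K_{x,y}$ with $a\le x\le y\le t-1$, whose number of copies of $K_{a,a}$ is $\binom{x}{a}\binom{y}{a}$.

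Next comes the packing step. For a block of total size $s=x+y\le 2(t-1)$ the quantity $\binom{x}{a}\binom{y}{a}$ is, subject to $x,y\le t-1$, maximized by the balanced split, giving $\phi(s):=\binom{\lfloor s/2\rfloor}{a}\binom{\lceil s/2\rceil}{a}$, and one checks that $\phi$ is nondecreasing and convex on the relevant range. Convexity means that for a fixed vertex budget it is best to concentrate it into full blocks of size $2t-2$, i.e.\ into copies of $K_{t-1,t-1}$, leaving a single balanced remainder $K_{\lfloor p/2\rfloor,\lceil p/2\rceil}$. Equivalently, $K_{t-1,t-1}$ must be shown to be the most efficient block: adjoining $2t-2$ new vertices increases the number of copies of $K_{a,a}$ by at most $\binom{t-1}{a}^2$, which is exactly the exchange inequality $g(m)\le g(m-(2t-2))+\binom{t-1}{a}^2$ that guarantees the maximizing $m$ is the residue $p=n-(2t-2)\lfloor n/(2t-2)\rfloor$.

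The crux, and the step I expect to be the main obstacle, is the base case: for every $m\le 2t-2$ one must prove $g(m)=\binom{\lfloor m/2\rfloor}{a}\binom{\lceil m/2\rceil}{a}$, that is, that the balanced complete bipartite graph beats \emph{every} $K_{1,t}$-free graph on $m$ vertices, not merely every complete bipartite one. This cannot be finessed purely structurally, since $m$ grows with $t$; one has to bound $\cN(K_{a,a},H)$ for an arbitrary bounded-degree $H$ (using that each vertex has at most $t-1$ neighbours, and that the copies occupy more than half the vertices and hence overlap heavily) and compare the resulting expression against $\binom{\lfloor m/2\rfloor}{a}\binom{\lceil m/2\rceil}{a}$. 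It is precisely this comparison that pins down the numerical hypothesis: $t\le \tfrac{8a}{7}+1$ is the range in which the required binomial-coefficient inequalities hold and no denser admissible small graph can overtake the balanced bipartite block. I would carry out this estimate last, once the reductions above have isolated exactly which inequalities must be verified.
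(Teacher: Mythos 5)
Your reduction skeleton (copies of $K_{a,a}$ are connected, so they live inside single components; an extremal graph decomposes into $k$ copies of $K_{t-1,t-1}$ plus a bounded leftover $H$; then optimize over the split) essentially mirrors the paper's own exchange argument, and most of it is already contained in the discussion following Proposition \ref{elso} that you cite. The genuine content of the proposition is exactly the step you postpone to the end and never carry out: showing that for $m\le 2t-2$ the balanced complete bipartite graph $K_{\lfloor m/2\rfloor,\lceil m/2\rceil}$ beats \emph{every} $K_{1,t}$-free graph on $m$ vertices, not just every complete bipartite one. You correctly flag this as the crux, but you offer only a hope (``bound $\cN(K_{a,a},H)$ for an arbitrary bounded-degree $H$ and compare''), and your intermediate assertion that the derived facts ``let me restrict attention to complete bipartite blocks $K_{x,y}$'' is a non sequitur: a $K_{1,t}$-free graph on $m<2t-2$ vertices need not be complete multipartite at all, and neither of your two consequences of $2a>t-1$ (complete multipartite graphs with at least three parts are $K_{a,a}$-free; any two copies of $K_{a,a}$ intersect) forces that structure. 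So, as written, the proposal establishes nothing beyond what the paper had already recorded before stating the proposition.

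The missing idea --- and the place where the hypothesis $t\le \frac{8a}{7}+1$ actually does its work --- is the paper's treatment of the small case $n<2t-2$: delete from $G$ every edge that lies in no copy of $K_{a,a}$; a degree count (using both $n<2t-2$ and $t\le \frac{8a}{7}+1$) shows the resulting graph is triangle-free, and then the theorem of Gy\H ori, Pach and Simonovits \cite{gyps}, that among $K_3$-free graphs a complete bipartite graph maximizes the number of copies of $K_{a,b}$, yields the base case. Your suggestion that the hypothesis is ``precisely the range in which the required binomial-coefficient inequalities hold'' mischaracterizes its role: the paper uses it structurally for triangle-freeness, and separately verifies a binomial inequality, namely $\binom{t-2}{a}\binom{t-2}{a-1}+\binom{t-2}{a-1}\binom{t-3}{a-1}<\binom{t-1}{a}\binom{t-2}{a-1}/2$, in order to show that every vertex not lying in a $K_{t-1,t-1}$ component is in at most half as many copies of $K_{a,a}$; this per-vertex halving is what makes the exchange step (your inequality $g(m)\le g(m-(2t-2))+\binom{t-1}{a}^2$, which you also assert rather than prove) go through. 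Until the base case and the exchange inequality are supplied, the proof is incomplete.
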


\begin{proof} Let us assume $t\ge 4$. A simple calculation shows that $\binom{t-2}{a}\binom{t-2}{a-1}+\binom{t-2}{a-1}\binom{t-3}{a-1}<\binom{t-1}{a}\binom{t-2}{a-1}/2$, which we will use later.

We deal first with the case $n< 2t-2$. Consider a $K_{1,t}$-free graph $G$.
Let us delete the edges of $G$ that are not contained in any copy of $K_{a,a}$. We claim that the resulting graph is $K_3$-free. Indeed, a triangle $xyz$ would mean that there is a $K_{a,a}$ with partite sets $X$ and $Y$, $x\in X$, $y\in Y$. Then without loss of generality, $z$ is connected to at most $t/2$ vertices in $X$. The edge $xz$ is also in a $K_{a,a}$, where $z$ is in a partite set $Z$ and $x$ is in a partite set $X'$. Then at least $a-t/2-(n-2a)$ elements of $X'$ are in $Y$, as they are all connected to $z$. This shows that $x$ is connected to those vertices plus the vertices of $Z$, thus, using the assumption $t\le \frac{8a}{7}+1$, the degree of $x$ is at least $a+a-t/2-(n-2a)\ge t$, a contradiction.

It was shown by Gy\H ori, Pach and Simonovits \cite{gyps} that among $K_3$-free graphs on $n$ vertices, a complete bipartite graph contains the most copies of $K_{a,b}$, and in particular $K_{\lfloor \frac{n}{2}\rfloor,\lceil \frac{n}{2}\rceil}$ contains the most copies of $K_{a,a}$, finishing the proof in this case.




Proposition \ref{elso} deals with the case $n=2t-2$, thus we can assume now $n>2t-2$.
Recall that in $K_{t-1,t-1}$, every vertex is contained in $\binom{t-1}{a}\binom{t-2}{a-1}$ copies of $K_{a,a}$
Let $G$ be a $K_{1,t}$-free graph on $n$ vertices and $v$ be one of its vertices. If $v$ is contained in a $K_{t-1,t-1}$, then this is a connected component of $G$. Otherwise, there are two possibilities. One of them is that $v$ has less than $t-1$ neighbors, in which case $v$ is contained in at most $\binom{t-2}{a}\binom{t-2}{a-1}<\binom{t-1}{a}\binom{t-2}{a-1}/2$ copies of $K_{a,a}$. The other possibility is that $v$ has a set $P$ of $t-1$ neighbors, but the vertices of $P$ have at most $t-2$ common neighbors. To pick a $K_{a,a}$ containing $v$, we need to pick $a$ vertices from $P$ ($\binom{t-1}{a}$ ways), and then $a$ of their common neighbors (including $v$). If any $a$-set has at most $t-2$ common neighbors including $v$, then $v$ is in at most $\binom{t-1}{a}\binom{t-3}{a-1}<\binom{t-1}{a}\binom{t-2}{a-1}/2$ copies of $K_{a,a}$.

If an $a$-set $A\subset P$ has a set $Q$ of $t-1$ common neighbors (including $v$), then any other $a$-set $A'\subset P$ has common neighbors only from $Q$, as $A'$ intersects $A$ in a vertex $v'$, and $v'$ has at most $t-1$ neighbors, i.e. only the vertices of $Q$. Therefore, the only way that vertices in $P$ do not have $t-1$ common neighbors is that at least one of the vertices of $P$ is not connected to some vertex of $Q$. Therefore, $v$ is contained in at most $\binom{t-2}{a}\binom{t-2}{a-1}+\binom{t-2}{a-1}\binom{t-3}{a-1}<\binom{t-1}{a}\binom{t-2}{a-1}/2$ copies of $K_{a,a}$.

In both cases, we obtained that every vertex that is not in a $K_{t-1,t-1}$ is contained in at most half as many copies of $K_{a,a}$ as those vertices that are in a $K_{t-1,t-1}$. If there are at least $m\ge 2t-2$ vertices in connected components that are different from $K_{t-1,t-1}$, then we can delete them and add $\lfloor m/(2t-2)\rfloor$ copies of $K_{t-1,t-1}$ to obtain more copies of $K_{a,a}$. Therefore, we can assume that we have $\lfloor n/(2t-2)\rfloor$ copies of $K_{t-1,t-1}$ in $G$, and the extremal graph on the remaining $p$ vertices, finishing the proof.
\end{proof}

We remark that with the same proof, one can obtain a similar bound for $\ex(n,K_{a,b},K_{s,t})$ if $t$ is not much bigger than $a$. However, the additional small graph on the remaining $p$ vertices might be unbalanced.

For larger $s$, we are unable to improve the upper bound. We can improve the trivial linear lower bound using Theorem \ref{neww}, which yields  $\ex(n,K_{a,b},K_{s,t})=\Omega(n^{3/2})$ if $t\ge 2a+2b-3$. For smaller values of $t$,
 Theorem \ref{randomconstr} yields that if $ab<st$, then $\ex(n,K_{a,b},K_{s,t})=\Omega(n^{a+b-\frac{ab(s+t-2)}{st-ab}})$.



Let us show now a connection to hypergraph Tur\'an problems.
A hypergraph $\cG$ is a Berge copy of a graph $G$ (in brief: a Berge-$G$) if its vertex set contains $V(G)$, and there is a bijection between the edges of $G$ and the hyperedges of $\cG$ such that each edge is contained in its image. In other words, we can add additional vertices to the edges of $G$ (arbitrarily) to obtain $\cG$.

Berge hypergraphs were introduced by Gerbner and Palmer \cite{gp1}, extending the well-established notion of hypergraph cycles due to Berge. A hypergraph is linear if any two of its hyperedges share at most one vertex. We say that a hypergraph has girth $k$ if it is linear and its shortest Berge cycle has length $k$. Note that the linearity can be thought of as forbidding Berge cycles of length 2.


\begin{prop} Let $2<a\le b<t$.
Let $\cH$ be an $(a+b)$-uniform hypergraph of girth at least 5. Let $G$ be a graph obtained by placing an arbitrary $K_{a,b}$ into every hyperedge of $\cH$. Then $G$ is $K_{2,t}$-free.
\end{prop}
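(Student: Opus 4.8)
The plan is to argue by contradiction: suppose $G$ contains a copy of $K_{2,t}$ with vertices $u,v$ on the small side and $t$ common neighbors. The first reduction is that, because $\cH$ is linear, every edge of $G$ sits inside a \emph{unique} hyperedge of $\cH$ (two distinct hyperedges containing both endpoints of an edge would share two vertices). Hence for each common neighbor $w$ of $u$ and $v$ there is a well-defined hyperedge $g(w)$ containing $\{u,w\}$ and a well-defined hyperedge $h(w)$ containing $\{v,w\}$, with $w$ lying in the opposite part of the placed $K_{a,b}$ from $u$ (resp. from $v$). I would then split the analysis according to whether $u$ and $v$ share a hyperedge $e^{*}$ (again unique by linearity).

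In the first case $e^{*}$ exists. Here I would rule out every common neighbor except those lying inside $e^{*}$ itself: if $g(w)=e^{*}$ but $h(w)\neq e^{*}$ (or vice versa) then $e^{*}$ and $h(w)$ share the two vertices $v,w$, contradicting linearity; and if $g(w),h(w)$ are both distinct from $e^{*}$ and from each other, then $u\,e^{*}\,v\,h(w)\,w\,g(w)\,u$ is a Berge $3$-cycle, contradicting girth $\ge 5$. Thus all common neighbors lie in the single $K_{a,b}$ inside $e^{*}$, and two vertices of one $K_{a,b}$ have at most $b$ common neighbors, yielding at most $b<t$, a contradiction.

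In the second case $u,v$ share no hyperedge, so $g(w)\neq h(w)$ for every common neighbor $w$, and moreover $u\notin h(w)$ and $v\notin g(w)$. The heart of the argument is to show that all these common neighbors are ``focused'' through a single hyperedge: for any two common neighbors $w,w'$, if $\{g(w),h(w)\}$ and $\{g(w'),h(w')\}$ were disjoint then $u\,g(w)\,w\,h(w)\,v\,h(w')\,w'\,g(w')\,u$ would be a Berge $4$-cycle, again contradicting girth $\ge 5$; since $g(w)\neq h(w')$ and $h(w)\neq g(w')$ hold automatically (as $v\notin g(\cdot)$ and $u\notin h(\cdot)$, while a hyperedge containing both $u$ and $v$ would be $e^{*}$), the only surviving possibilities are $g(w)=g(w')$ or $h(w)=h(w')$. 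A short combinatorial step then upgrades ``every pair agrees in one coordinate'' to ``all share one fixed $g^{*}$, or all share one fixed $h^{*}$'': if two common neighbors disagree in their $g$-hyperedge they must agree in a common $h$-hyperedge $h^{*}$, and one checks each remaining common neighbor is then forced into $h^{*}$ as well. All common neighbors thus lie inside a single hyperedge through $u$ or through $v$, so there are again at most $b<t$ of them.

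The main obstacle I anticipate is precisely this focusing step: the ``external'' common neighbors (reached through two different hyperedges) are where a naive count gives only something like $2b$ and fails. The resolution is the Berge-$4$-cycle obstruction combined with the linearity-based fact that a $g$-hyperedge and an $h$-hyperedge can never coincide, which pins all external common neighbors onto one hyperedge. Once that is in place the counting is immediate, and I would close by noting that in both cases any two vertices of $G$ have at most $b\le t-1$ common neighbors, so $G$ is $K_{2,t}$-free.
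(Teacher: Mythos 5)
Your proof is correct, and it is organized genuinely differently from the paper's, even though both arguments rest on the same three obstructions: linearity (each edge of $G$ lies in a unique hyperedge, hence in the $K_{a,b}$ placed there), the absence of Berge triangles and Berge $4$-cycles, and the fact that two vertices of a single $K_{a,b}$ have at most $b<t$ common neighbors. The paper never classifies all common neighbors: it fixes two vertices $v,v'$ of the large side, observes that the four edges joining them to the two centers either lie in four distinct hyperedges (giving a Berge-$C_4$) or some hyperedge contains three of the four vertices, and then kills each such local configuration with a Berge triangle or a linearity violation, except the degenerate case where one hyperedge contains the entire large side, which dies by bipartiteness of the placed $K_{a,b}$. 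You instead fingerprint every common neighbor $w$ by the pair $(g(w),h(w))$, split on whether the two centers share a hyperedge, and in the non-shared case prove a focusing lemma: any two fingerprints must share a coordinate (else a Berge-$C_4$ arises), and since a $g$-hyperedge can never equal an $h$-hyperedge, a pigeonhole step forces one coordinate to be common to all, so that every common neighbor lies in a single hyperedge. Your route is longer but more structural: it makes explicit that any two vertices of such a $G$ have at most $b$ common neighbors (so $G$ is even $K_{2,b+1}$-free), whereas the paper's argument is more economical, extracting a contradiction from only two or three vertices of the large side. One small point in your Case 1: your enumeration omits the sub-case $g(w)=h(w)\neq e^{*}$; it is vacuous, since such a hyperedge would contain both $u$ and $v$ and hence equal $e^{*}$ by the uniqueness you already invoked, but this should be said explicitly.
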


\begin{proof}
Assume indirectly that there is a copy $K$ of $K_{2,t}$ in $G$ with partite sets $\{u,u'\}$ and $T$. Observe that if an edge $e$ of $G$ is contained in a hyperedge $h$ of $\cH$, then no other hyperedge of $\cH$ contains $e$, hence $e$ must be in the copy of $K_{a,b}$ embedded into $h$. Let $v,v'$ be vertices of $T$. If the four edges $uv,u'v,uv',u'v'$ are contained in four different hyperedges, those hyperedges form a Berge-$C_4$, a contradiction. Otherwise there is a hyperedge $h$ containing at least three of $u,u',v,v'$.

Assume first that $h$ contains $u,u',v$. If there is a vertex $v''\in T\setminus h$, then $uv''$ and $u'v''$ must come from hyperedges $h',h''$ different from each other and from $h$ because of the linearity, and then $h,u,h',v'',h'',u'$ form a Berge triangle, a contradiction. If $h$ contains $T$, then $u,u'$ are in the same partite set of the $K_{a,b}$ embedded into $h$. Then a third vertex $v''\in T$ is in the same partite set (as $T$ does not fit into the other partite set), but then the edge $uv''$ cannot be in $G$, a contradiction.

Assume now that $h$ contains $u,v,v'$, but not $u'$. If the edges $u'v$ and $u'v'$ are in the same hyperedge $h'$, that contradicts the linearity. If $u'v\in h'$ and $u'v'\in h''$ with $h'\neq h''$, then $h,v,h',u',h'',v'$ is a Berge triangle, a contradiction finishing the proof.
\end{proof}

By the above proposition, in the case $2\le s<a\le b<t$, we have that $\ex(n,K_{a,b},K_{s,t})$ is at least the largest size of an $(a+b)$-uniform hypergraph $\cH$ without Berge cycles of length at most $4$. 
As hyperedges of $\cH$ intersect in at most one vertex, it is easy to see that $\cH$ has $O(n^2)$ hyperedges, thus this lower bound is often much weaker than the previous ones. In fact, as Berge cycles of length 4 are forbidden, there are $O(n^{3/2})$ hyperedges in $\cH$ due to a result of Gy\H ori and Lemons \cite{gyl}. 
Lazebnik and Verstra\"ete \cite{lazver} studied $r$-uniform hypergraphs without Berge cycles of length at most 4, and proved the bound $\Omega(n^{4/3})$. Timmons and Verstra\"ete \cite{vt} mention that this can be improved to $n^{\frac{3}{2}-o(1)}$ using a construction of Ruzsa \cite{ruzsa}, thus we have the same lower bound for $\ex(n,K_{a,b},K_{s,t})$ if $2<a\le b<t$. We obtained Theorem \ref{a<b} (iii), which we restate as a corollary.

\begin{corollary}
Let $2<a\le b<t$. Then $\ex(n,K_{a,b},K_{s,t})=\Omega(n^{\frac{3}{2}-o(1)})$.
\end{corollary}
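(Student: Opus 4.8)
The plan is to deduce this lower bound directly from the construction supplied by the immediately preceding proposition, so essentially no new combinatorics is required beyond assembling the pieces. Recall that the preceding proposition shows that if we take any $(a+b)$-uniform hypergraph $\cH$ of girth at least $5$ (equivalently, linear and with no Berge cycle of length $3$ or $4$) and place an arbitrary copy of $K_{a,b}$ into each hyperedge, then the resulting graph $G$ is $K_{2,t}$-free. Since $s\ge 2$ and $K_{2,t}\subseteq K_{s,t}$, such a $G$ is also $K_{s,t}$-free. Moreover each hyperedge of $\cH$ yields at least one copy of $K_{a,b}$ in $G$, and because $\cH$ is linear, distinct hyperedges give rise to distinct copies of $K_{a,b}$ (two copies sharing an $(a+b)$-subset of vertices would force the two hyperedges to coincide). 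Hence
\[
\ex(n,K_{a,b},K_{s,t}) \;\ge\; \cN(K_{a,b},G) \;\ge\; |E(\cH)|.
\]

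The next step is to invoke the existence of a suitably dense $(a+b)$-uniform hypergraph of girth at least $5$. Here I would cite the work of Lazebnik and Verstra\"ete \cite{lazver} on $r$-uniform hypergraphs with no Berge cycle of length at most $4$, together with the improvement noted by Timmons and Verstra\"ete \cite{vt}: using Ruzsa's construction \cite{ruzsa}, one obtains such hypergraphs on $n$ vertices with $n^{3/2-o(1)}$ hyperedges, for any fixed uniformity $r=a+b$. Feeding this $\cH$ into the displayed inequality gives
\[
\ex(n,K_{a,b},K_{s,t}) \;\ge\; |E(\cH)| \;=\; n^{3/2-o(1)},
\]
which is exactly the claimed bound $\Omega(n^{3/2-o(1)})$.

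Since all of the heavy lifting is outsourced — the $K_{s,t}$-freeness to the previous proposition and the density of the girth-$5$ hypergraph to the cited literature — the main point requiring care is the bookkeeping that the map from hyperedges to copies of $K_{a,b}$ is injective, so that the count $|E(\cH)|$ genuinely transfers to a lower bound on $\cN(K_{a,b},G)$. This is where linearity is used: if two hyperedges produced the same copy of $K_{a,b}$, they would share all $a+b$ of its vertices and hence more than one vertex, violating linearity. I expect this injectivity remark to be the only potential obstacle, and it is mild; everything else is a direct substitution of known results into the construction. The hypotheses $2<a\le b<t$ are inherited verbatim from the preceding proposition and guarantee both that the construction applies and that $b<t$ (so that $K_{a,b}$-copies are not automatically trivial), so no additional case analysis is needed.
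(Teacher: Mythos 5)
Your proof is correct and follows essentially the same route as the paper: it feeds the preceding proposition (placing copies of $K_{a,b}$ into an $(a+b)$-uniform girth-$5$ hypergraph yields a $K_{2,t}$-free, hence $K_{s,t}$-free, graph) into the Lazebnik--Verstra\"ete / Timmons--Verstra\"ete / Ruzsa bound of $n^{3/2-o(1)}$ hyperedges. Your explicit injectivity check (distinct hyperedges give distinct copies of $K_{a,b}$ by linearity) is left implicit in the paper but is exactly the right justification for transferring $|E(\cH)|$ to a lower bound on the number of copies.
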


In the rest of this section we describe further similar connections to hypergraph Tur\'an problems. However, they do not improve the bounds we have obtained, due to the lack of results on the corresponding hypergraph Tur\'an problems. Furthermore, we relate $\ex(n,K_{a,b},K_{s,t})$ to the size of some linear hypergraphs, thus we cannot hope for a superquadratic lower bound (recall that our upper bound is $O(n^s)$). For these reasons, we postpone the proofs to the Appendix.

\begin{prop}\label{p1}
Let $2\le s<a\le b$, $p\ge s$ and $t\ge (a+b-2)\binom{s}{2}+(a+b-1)^sp$.
Let $\cH$ be an $(a+b)$-uniform linear hypergraph that does not contain a Berge-$K_{s,p}$. Let $G$ be a graph obtained by placing an arbitrary $K_{a,b}$ into every hyperedge of $\cH$. Then $G$ is $K_{s,t}$-free.
\end{prop}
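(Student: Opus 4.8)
The plan is to argue by contradiction: suppose $G$ contains a copy $K$ of $K_{s,t}$ with partite sets $U$ (of size $s$) and $T$ (of size $t$). As in the previous proposition, the key structural fact is that each edge of $G$ lies in exactly one hyperedge of $\cH$ and must appear in the $K_{a,b}$ embedded there; linearity guarantees this uniqueness. First I would record the crucial constraint that linearity imposes: any single hyperedge $h$ of $\cH$ meets the vertex set of $K$ in at most $a+b$ vertices, and within $h$ the edges of $K$ that it can host are confined to the single $K_{a,b}$ placed on $h$. Consequently, the edges of $K$ are distributed among the hyperedges of $\cH$, and I want to bound how many edges of $K$ a single hyperedge can carry and how the hyperedges incident to a fixed vertex of $U$ can overlap on $T$.

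\textbf{Localizing the copy of $K_{s,t}$.} The heart of the argument is to show that the whole (or almost all) of $K$ must be confined to relatively few hyperedges, which then forces a Berge-$K_{s,p}$ to appear in $\cH$. Consider the $s$ vertices of $U$. Each edge $u_iv$ with $u_i\in U$, $v\in T$ lies in a unique hyperedge; for a fixed $u_i$, group the vertices of $T$ according to which hyperedge hosts the edge $u_iv$. Because $\cH$ is linear and $(a+b)$-uniform, a single hyperedge containing $u_i$ can host at most $a+b-1$ of these edges (the other endpoints lie in the remaining $a+b-1$ vertices of that hyperedge). More importantly, two distinct vertices $u_i,u_j\in U$ can share a common host hyperedge only in limited ways before a short Berge cycle is forced. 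I would quantify this: the number of vertices of $T$ whose edges to $u_i$ are "spread out" among hyperedges not shared across the pairs in $U$ is controlled, and the contribution of such spread-out vertices is bounded by a term of order $(a+b-2)\binom{s}{2}$, accounting for pairs in $U$ that can legitimately coexist in one hyperedge without creating a Berge cycle.

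\textbf{Forcing the Berge-$K_{s,p}$.} Once the "spread-out" vertices of $T$ are removed, the remaining $t - (a+b-2)\binom{s}{2}$ vertices of $T$ are each connected to all of $U$ through a controlled family of hyperedges. The threshold $t\ge (a+b-2)\binom{s}{2}+(a+b-1)^s p$ is engineered precisely so that, after discarding the bounded number of exceptional vertices, at least $(a+b-1)^s p$ vertices of $T$ remain; a pigeonhole over the at most $(a+b-1)^s$ possible "patterns" of which hyperedge hosts each edge $u_iv$ (there being at most $a+b-1$ choices per $u_i$ and $s$ vertices in $U$) then yields $p$ vertices of $T$ sharing the same pattern. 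These $p$ vertices together with $U$ and the $s\cdot s$ or fewer hyperedges realizing the common pattern furnish a Berge-$K_{s,p}$ in $\cH$, contradicting the hypothesis. The main obstacle I anticipate is the careful bookkeeping in the localization step: verifying that the pattern-counting is correct and that a common pattern genuinely produces a Berge-$K_{s,p}$ (with an injective assignment of distinct hyperedges to the edges of $K_{s,p}$, as the Berge definition demands) rather than merely a weaker configuration. Ensuring the hyperedges used across the $p$ copies can be chosen distinct where required — or arguing linearity already forces enough distinctness — is the delicate point that the constants $(a+b-2)\binom{s}{2}$ and $(a+b-1)^s p$ are designed to handle.
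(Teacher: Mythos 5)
Your setup and your first reduction match the paper's proof: assume a copy of $K_{s,t}$ with parts $U$ and $T$, note that each edge of $G$ lies in the unique hyperedge whose embedded $K_{a,b}$ contains it, and discard the at most $(a+b-2)\binom{s}{2}$ vertices of $T$ lying in a hyperedge with two or more vertices of $U$ (each pair of vertices of $U$ lies in at most one hyperedge by linearity, and that hyperedge has only $a+b-2$ further vertices). The gap is in your final step, and it is twofold. First, the pigeonhole count is unjustified: linearity does not bound the number of hyperedges containing a fixed $u_i\in U$ by $a+b-1$; it only says that each such hyperedge contains at most $a+b-1$ vertices of $T$. A vertex of $U$ may lie in as many as $|T|$ relevant hyperedges, so the number of ``patterns'' is not at most $(a+b-1)^s$, and no pigeonhole over patterns can be run. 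Second, and more fundamentally, vertices of $T$ sharing the \emph{same} pattern are the opposite of what you need. By the paper's definition, a Berge-$K_{s,p}$ requires a bijection between the $sp$ edges of $K_{s,p}$ and $sp$ \emph{distinct} hyperedges; if $p$ vertices of $T$ had the same pattern, then all their edges to a fixed $u_i$ would lie in a single hyperedge, so the configuration would involve at most $s$ hyperedges and could never be a Berge-$K_{s,p}$. Your closing remark correctly identifies distinctness of hyperedges as the delicate point, but the mechanism you propose works against it.

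What the threshold $(a+b-1)^sp$ is actually designed for is an iterated thinning, not a pigeonhole: process the vertices $u_1,\dots,u_s$ of $U$ one at a time, and at step $i$ keep, for each hyperedge through $u_i$, only one of the currently surviving vertices of $T'$ whose edge to $u_i$ it hosts. Each step shrinks the surviving set by a factor of at most $a+b-1$, so after $s$ steps at least $p$ vertices remain, and for these the patterns are pairwise \emph{disjoint}: for a fixed $u_i$, the hosting hyperedges of distinct surviving vertices are distinct by construction, while hyperedges associated to distinct $u_i,u_j$ (or to distinct pairs altogether) are distinct because, after your first reduction, every hosting hyperedge contains exactly one vertex of $U$ and would otherwise have to contain two. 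These $sp$ pairwise distinct hyperedges form a Berge-$K_{s,p}$ in $\cH$, giving the desired contradiction.
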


Unfortunately, there are not many results on linear hypergraphs without Berge-$K_{s,p}$. In the case $s=2$, Timmons \cite{timm} showed a construction with $\Omega(n^{3/2})$ hyperedges for any uniformity (further results were obtained in \cite{gmv2} for uniformity 3). However, this does not improve the bounds obtained using Theorem \ref{neww}.

We can say more at the cost of increasing $t$ (not a big cost, as we did not make any effort to optimize the threshold on $t$). Let us define a third type of graph-based hypergraph. The $r$-uniform expansion $G^{+r}$ of a graph $G$ is the Berge copy where for two edges $e$ and $e'$ of $G$, their images share only the one or zero vertex that $e$ and $e'$ share. In other words, we can add $r-2$ additional vertices to the edges of $G$ to obtain $\cG$, such that every new vertex is added only to one hyperedge.

\begin{prop}\label{p2}
Let $2\le s<a\le b$, $p\ge s$ and $t\ge (p-1)s(s-1)(a+b-2)^2+(a+b-2)\binom{s}{2}+1$.
Let $\cH$ be an $(a+b)$-uniform linear hypergraph that does not contain a $K_{s,p}^{+(a+b)}$. Let $G$ be a graph obtained by placing an arbitrary $K_{a,b}$ into every hyperedge of $\cH$. Then $G$ is $K_{s,t}$-free.
\end{prop}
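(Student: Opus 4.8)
The plan is to argue by contradiction: assume $G$ contains a copy $K$ of $K_{s,t}$ with partite sets $U=\{u_1,\dots,u_s\}$ and $T$, where $|T|=t$, and extract from $\cH$ a copy of $K_{s,p}^{+(a+b)}$, contradicting the hypothesis. Everything hinges on the linearity of $\cH$: since any two vertices lie together in at most one hyperedge, every edge of $G$ is the image of a single, well-defined hyperedge. In particular, for each $i$ and each $v\in T$ there is a unique hyperedge $h_i(v)\in\cH$ containing both $u_i$ and $v$, and the edge $u_iv$ of $K$ is an edge of the $K_{a,b}$ placed on $h_i(v)$.

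First I would discard a few vertices of $T$. Call $v\in T$ \emph{bad} if some hyperedge contains $v$ together with two of the $u_i$. Since by linearity each of the $\binom{s}{2}$ pairs $\{u_i,u_{i'}\}$ lies in at most one hyperedge, and such a hyperedge contains at most $a+b-2$ further vertices, there are at most $(a+b-2)\binom{s}{2}$ bad vertices. Let $T'$ be the set of remaining \emph{good} vertices; then $|T'|\ge t-(a+b-2)\binom{s}{2}\ge (p-1)s(s-1)(a+b-2)^2+1$ by the assumption on $t$. For a good $v$ the hyperedges $h_1(v),\dots,h_s(v)$ are pairwise distinct, and each contains exactly one of $u_1,\dots,u_s$; this is the local structure I will glue together.

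The core of the proof is to select $p$ good vertices $v_1,\dots,v_p$ whose hyperedges $\{h_i(v_j)\}$ form a clean expansion. Call two good vertices $v,v'$ \emph{compatible} if (i) no hyperedge contains $u_i,v,v'$ for any $i$, and (ii) $h_i(v)\cap h_{i'}(v')=\emptyset$ whenever $i\ne i'$. The key point is that pairwise compatibility already forces the full structure: within a single good $v$ the $s$ hyperedges pairwise meet exactly in $v$; condition (i) guarantees that $h_i(v_j)$ and $h_i(v_{j'})$ are distinct and, by linearity, meet exactly in $u_i$; and condition (ii) guarantees $h_i(v_j)\cap h_{i'}(v_{j'})=\emptyset$ for $i\ne i'$. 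Thus every vertex shared by two of these $sp$ hyperedges is a core vertex $u_i$ or $v_j$, shared exactly in the pattern of the edges of $K_{s,p}$, so the hyperedges $h_i(v_j)$ with $1\le i\le s$ and $1\le j\le p$ constitute a copy of $K_{s,p}^{+(a+b)}$, the desired contradiction.

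It remains to find $p$ pairwise compatible good vertices, which I would obtain greedily: repeatedly pick a good vertex and delete it together with all good vertices incompatible with it. The number of deletions per step is controlled by bounding, for a fixed good $v$, the number of good $v'$ incompatible with it. Type (i) incompatibilities force $v'\in h_i(v)$ for some $i$, giving at most $s(a+b-2)$ vertices. For type (ii), fixing an ordered pair $i\ne i'$, linearity shows $h_i(v)$ and $h_{i'}(v')$ meet in at most one vertex $w$, which must lie in $h_i(v)\setminus\{u_i,v\}$ (the cases $w=u_i$ and $w=v$ being impossible for a good $v'$, or already counted under (i)); each such $w$ determines the hyperedge $h_{i'}(v')$ uniquely as the one through $u_{i'}$ and $w$, confining $v'$ to the $O(a+b)$ remaining vertices of that hyperedge. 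Summing over the $s(s-1)$ ordered pairs gives a bound of the shape $s(s-1)(a+b-2)^2$ on the number of incompatible partners, and the threshold on $t$ is calibrated precisely so that $|T'|$ exceeds $(p-1)$ times this quantity, ensuring the greedy procedure runs for $p$ steps. I expect this final count — pinning the constants so that the discarded bad vertices together with the per-step losses fit under the stated (deliberately non-optimized) bound on $t$ — to be the main technical obstacle; the conceptual heart, that linearity turns each edge of the putative $K_{s,t}$ into a uniquely determined hyperedge and hence a large $K_{s,t}$ into a copy of the forbidden expansion, is clean.
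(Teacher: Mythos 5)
Your proposal is, in substance, the same proof as the paper's: prune the at most $(a+b-2)\binom{s}{2}$ vertices of $T$ that lie in a hyperedge containing two vertices of the small side, then select $p$ of the remaining vertices one by one, discarding at each step all vertices whose connecting hyperedges would conflict with those already chosen, and finally observe that linearity forces the $sp$ chosen hyperedges to intersect exactly in the pattern of the edges of $K_{s,p}$, i.e.\ to form a $K_{s,p}^{+(a+b)}$. The difference is organizational: the paper works with nested sets $T^1\supseteq T^2\supseteq\cdots$, deleting after each pick, whereas you define a symmetric compatibility relation and greedily extract a pairwise compatible set. Your formulation is in fact the more careful one. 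In the paper's deletion rule, when a non-core vertex $w$ of a chosen hyperedge happens to lie in $T^1$, only $w$ itself is deleted; a good vertex $u^*$ lying in a hyperedge through $w$ and some other vertex $v_{i'}$ of $S$ survives, and if $u^*$ is picked later, linearity forces its hyperedge through $v_{i'}$ to be precisely that hyperedge, which meets the earlier chosen hyperedge in $w$ --- exactly the kind of intersection the paper's final verification claims cannot occur. Your condition (ii) deletes such $u^*$ and so closes this case; it is the correct repair of the paper's rule.

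The price of that repair is the shortfall you flagged yourself: your per-step loss is $1+s(a+b-2)+s(s-1)(a+b-2)^2$, while the stated threshold on $t$ budgets only $s(s-1)(a+b-2)^2$ per step (the bad-vertex term $(a+b-2)\binom{s}{2}$ consumes the rest exactly). So as written your argument proves the proposition only with $t$ enlarged by the lower-order additive term $(p-1)\bigl(s(a+b-2)+1\bigr)$; your (correct) count cannot be squeezed under the literal bound, and the paper's count fits it only by virtue of the lax deletion rule described above. Since the proposition's threshold is explicitly not optimized, this is a constants issue rather than a missing idea or a wrong approach, but to obtain the statement verbatim one should either tighten the accounting or restate the hypothesis as $t\ge (p-1)\bigl(s(s-1)(a+b-2)^2+s(a+b-2)+1\bigr)+(a+b-2)\binom{s}{2}+1$.
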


There are several extremal results concerning expansions, see \cite{MV2016} for a survey. However, much less is known in the linear setting. 

\section{Concluding remarks}

In this paper we studied $\ex(n,K_{a,b},K_{s,t})$ for different values of the parameters, with varying success. We left wide open the question what happens if $1<s<a\le b<t$. None of the upper and lower bounds we obtained seems to be close to the value of $\ex(n,K_{a,b},K_{s,t})$ in this case. 

In Section 2, Theorem \ref{a<s} (iii) and (iv) are stability results for some values of $a,b,s,t$. Very recently, Livinsky \cite{liv} observed that the $K_{2,t+1}$-free constructions of F\"uredi do not contain any $K_{3,3}$. It is not hard to see that the construction with $t=(1+o(1))n^{1/3}$ contains $(1/2+o(1))\binom{n}{3}=(1/2+o(1))ex(n,K_{2,3},K_{3,3})$ copies of $K_{2,3}$ and its structure is very different from that of the extremal graphs.

An obvious next step in future research is to consider complete $r$-partite graphs. Some of our results can be extended to this setting easily. Let us show one example, a generalization of Theorem \ref{a<s} (i).

\begin{prop}
Let $a_1\le\dots\le a_r$ and $s_1\le\dots\le s_r$. If $a_1<s_1$ and $s_i\le a_{i+1}$ for each $i<r$,
then we have $\ex(n,K_{a_1,\dots,a_r},K_{s_1,\dots,s_r})=\Theta(n^{a_2+a_3+\dots+a_r})$.
\end{prop}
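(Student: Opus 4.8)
The plan is to prove matching upper and lower bounds of order $n^{a_2+\dots+a_r}$, reusing the common-neighbourhood idea behind Theorem~\ref{a<s}~(i) but now exploiting the interleaving hypotheses $s_i\le a_{i+1}$.

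For the upper bound I would count a copy of $H=K_{a_1,\dots,a_r}$ by first choosing its $r-1$ larger parts. Since $a_1<s_1\le a_2$, the part of size $a_1$ is the unique smallest one, so each copy decomposes canonically as a set $W$ of $a_2+\dots+a_r$ vertices carrying a $K_{a_2,\dots,a_r}$, together with an $a_1$-set $A_1\subseteq N_G(W)$ (note $N_G(W)\cap W=\emptyset$, so $A_1$ is automatically disjoint from $W$). There are $O(n^{a_2+\dots+a_r})$ choices of $W$, so it suffices to show $|N_G(W)|\le s_r-1$ for every such $W$; then the number of completions is at most $\binom{s_r-1}{a_1}=O(1)$ and the bound follows. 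The key claim is: if $W$ hosts $K_{a_2,\dots,a_r}$ and $|N_G(W)|\ge s_r$, then $G$ contains $K_{s_1,\dots,s_r}$. To see this, match the part of $W$ of size $a_{i+1}$ with the target size $s_i$ for $i=1,\dots,r-1$; since $s_i\le a_{i+1}$ we may pick sub-parts of sizes $s_1,\dots,s_{r-1}$, which span a $K_{s_1,\dots,s_{r-1}}$, and then any $s_r$ vertices of $N_G(W)$ serve as the last part, all of them being adjacent to every vertex of $W$. This produces a forbidden $K_{s_1,\dots,s_r}$, a contradiction.

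For the lower bound I would take the complete $r$-partite graph $G^*=K_{s_1-1,m_2,\dots,m_r}$ whose first part has size $s_1-1$ and whose remaining parts are as balanced as possible, each of size $m_i=\Theta(n)$. Placing the part of size $a_1$ into the small part (possible as $a_1\le s_1-1$) and the part of size $a_i$ into the $i$-th large part for $i\ge 2$ yields $\binom{s_1-1}{a_1}\prod_{i=2}^r\binom{m_i}{a_i}=\Theta(n^{a_2+\dots+a_r})$ copies of $H$. It then remains to check that $G^*$ is $K_{s_1,\dots,s_r}$-free. Here I would argue that in any copy of $K_{s_1,\dots,s_r}$ inside $G^*$, two vertices lying in the same part of $G^*$ are non-adjacent and hence must lie in the same part of the copy; thus each of the $r$ parts of $G^*$ feeds a single part of the copy, and a counting/surjectivity argument forces this assignment of $G^*$-parts to copy-parts to be a bijection. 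Consequently some part of the copy is contained in the part of $G^*$ of size $s_1-1$, making its size at most $s_1-1<s_1\le s_{j}$ for every $j$, which is impossible.

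The routine points are the exact enumeration of copies and the elementary degree bookkeeping, so I expect the main obstacle to be the freeness verification of $G^*$: the subtlety is that a part of the copy need not be an independent set of $G^*$, so one cannot simply read off the parts, and the bijection argument above is precisely what rules out the degenerate ``merging'' of several $G^*$-parts into a single part of the copy.
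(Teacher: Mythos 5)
Your proposal is correct and takes essentially the same route as the paper: the paper's upper bound likewise counts copies of $K_{a_2,\dots,a_r}$ and observes that in a $K_{s_1,\dots,s_r}$-free graph their common neighbourhoods have size at most $s_r-1$ (using $s_i\le a_{i+1}$), and its lower bound is the same complete $r$-partite graph with one part of size $s_1-1$ and $r-1$ parts of linear size. The only difference is that you spell out details the paper leaves implicit, namely the canonical decomposition of a copy of $K_{a_1,\dots,a_r}$ and the bijection argument verifying that the construction is $K_{s_1,\dots,s_r}$-free; both are correct.
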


\begin{proof}
The lower bound is given by any complete $r$-partite graph with one part of size $s_1-1$ and $r-1$ parts of linear size. For the upper bound, observe that if we choose in a $K_{s_1,\dots,s_r}$-free graph a copy of the complete $(r-1)$-partite graph $K_{a_2,\dots,a_r}$, then its vertices have at most $s_r-1$ common neighbors. Therefore, the number of copies of $K_{a_1,\dots,a_r}$ is at most $\binom{s_r-1}{a_1}$ times the number of copies of $K_{a_2,\dots,a_r}$, which is $\Theta(n^{a_2+a_3+\dots+a_r})$, finishing the proof.
\end{proof}

However, the cases $r\ge 3$ can be much more complicated and introduce new phenomena. In particular, let us consider the smallest such pair of graphs, $K_{1,1,1}=K_3$ and $K_{1,1,2}$. One would think that this would be an easy problem and difficulties arise only for larger values of the parameters. This is far from the truth.
Alon and Shikhelman \cite{as} showed that for any $t$, $\ex(n,K_{1,1,1},K_{1,1,t})=o(n^2)$ but $\ex(n,K_{1,1,1},K_{1,1,t})=n^{2-o(1)}$. Moreover, this is a reformulation of the celebrated Ruzsa-Szemer\'edi 6-3 theorem \cite{rsz}. Gowers and Janzer \cite{gj} generalized this for any $r$. Let $K$ denote the complete $r$-partite graph on $r+1$ vertices, i.e. $K_{1,1,\dots,1,2}$. Then $\ex(n,K_r,K)=o(n^{r-1})$ but $\ex(n,K_r,K)=n^{r-1-o(1)}$.

\bigskip

A possible weakening is to forbid a subgraph of $K_{s,t}$. 
Gerbner, Nagy and Vizer \cite{gnv} showed $\ex(n,K_{2,b},C_{2s})=(1+o(1))\binom{s-1}{2}\binom{n}{b}=(1+o(1))\cN(K_{2,b},K_{s-1,n-s+1})$ (the case $b=2$ was proved in \cite{ggymv}). This result is implied by Theorem \ref{a<s} (ii) if $b\ge s$, and is improved to an exact result by Theorem \ref{a<s} (iv) if $b>s=3$.
Similarly, in the cases where the extremal construction is $K_{s-1,n-s+1}$, then the bounds we obtained for $\ex(n,K_{a,b},K_{s,t})$ also hold for $\ex(n,K_{a,b},F)$ for each bipartite graph $F$ with the property that in any proper two-coloring of $F$, both color classes have order at least $s$ and at most $t$. 

\section{Appendix}

\begin{proof}[Proof of Proposition \ref{p1}]
Assume there is a copy of $K_{s,t}$ in $G$ with smaller partite set $S$ and larger partite set $T$. First we show that the number of vertices of $T$ that are contained in a hyperedge with more than one vertex of $S$ is at most $(a+b-2)\binom{s}{2}$. Indeed, every 2-set in $S$ is contained in at most one hyperedge of $\cH$, and that hyperedge contains $a+b-2$ other vertices. Let $T'$ be an arbitrary set of $(a+b-1)^sp$ other vertices in $T$.

Therefore, there is a $K_{s,(a+b-1)^sp}$ with partite sets $S$ and $T'$, such that for every vertex of $T'$, the edges of this $K_{s,(a+b-1)^sp}$ incident with it each come from distinct hyperedges of $\cH$. Consider a $u\in S$. At most $a+b-1$ vertices of $T'$ are contained in the same hyperedge together with $u$. For each hyperedge, we keep only one such vertex, and delete the rest. This way we keep at least $(a+b-1)^{s-1}p$ vertices from $T'$. We repeat this for every vertex of $S$, and at the end there are at least $p$ vertices remaining. Therefore, we obtain a $K_{s,p}$ where each edge comes from a hyperedge that contains only one vertex from $S$ and only one vertex from $T$. Those hyperedges form a Berge-$K_{s,p}$, a contradiction. 
\end{proof}

We remark that the proof shows a bit more: we do not need to forbid every Berge-$K_{s,p}$, only those where in the bijection defining the Berge-$K_{s,p}$, the image of every edge $uv$ contains only $u$ and $v$ from $V(K_{s,p})$. In other words, we can add additional vertices that are not in $V(G)$ to the edges of $G$ to obtain $\cG$. Equivalently, the trace of a subhypergraph of $\cH$ is exactly a $K_{s,p}$ on a set of $s+p$ vertices. Extremal problems for such hypergraphs have been studied, see e.g. \cite{fl}, but we are not aware of any results in the linear setting.

\begin{proof}[Proof of Proposition \ref{p2}]
Assume there is a copy of $K_{s,t}$ in $G$ with smaller partite set $S=\{v_1,\dots,v_s\}$ and larger partite set $T$. First we show that the number of vertices of $T$ that are contained in a hyperedge with more than one vertex of $S$ is at most $(a+b-2)\binom{s}{2}$. Indeed, every 2-set in $S$ is contained in at most one hyperedge of $\cH$, and that hyperedge contains at most $a+b$ vertices. Let $T^1$ be an arbitrary set of $(p-1)s(s-1)(a+b-2)^2+1$ other vertices in $T$.

Therefore, there is a $K_{s,(p-1)s(s-1)(a+b-2)^2+1}$ with partite sets $S$ and $T^1$, such that for every vertex $u\in T^1$, the edges of this subgraph incident with $u$ each come from distinct hyperedges of $\cH$. Let us pick an arbitrary hyperedge $h$ containing $v_1$ and a vertex $u_1$ of $T^1$ (recall that $v_1\in S$). Let $w$ be another vertex of $h$. By the above, $w\not\in S$. If $w\in T^1$, we delete it from $T^1$. Otherwise, there are at most $s-1$ hyperedges besides $h$ that contain a vertex from $S$ and contain $w$ by the linearity of $\cH$. These $s-1$ hyperedges contain at most $(s-1)(a+b-2)$ vertices from $T^1$; we delete those vertices. This way altogether we deleted at most $(s-1)(a+b-2)^2$ vertices. Then we choose a hyperedge containing $v_2$ and $u_1$, and repeat this procedure. After that we repeat it for $v_3$ and $u_1$, and so on. 

At the end of this we have deleted at most $s(s-1)(a+b-2)^2$ vertices of $T^1$ to obtain $T^2$. Note that by the above procedure, the $s$ hyperedges containing $u_1$ and $v_i$ for $1\le i\le s$ are such that these hyperedges do not share any vertex with any hyperedge that contains a vertex from $S$ and a vertex from $T^2$. We pick another  hyperedge containing $v_1$ and a vertex $u_2$ of $T^2$ and apply the same procedure, and so on. That is we obtain a subset $T^3$ of $T^2$ such that the hyperedges $h_{i,2}$ containing $v_i$ and $u_2$ do not share any vertex with any hyperedge that contains a vertex from $S$ and a vertex from $T^3$. In general, we pick $u_j$ and then $T^{j+1}\subset T^j$ such that that the hyperedges $h_{i,j}$ containing $v_i$ and $u_j$ do not share any vertex with any hyperedge that contains a vertex from $S$ and a vertex from $T^{j+1}$. As in each iteration, we delete at most $s(s-1)(a+b-2)^2$ vertices, after picking $u_1,\dots, u_{p-1}$, we still have a last vertex $u_p$ to pick. 

We are left with a copy $K$ of $K_{s,p}$ in $G$, where one of the partite sets is $S$. Let us denote the other partite set by $P$. For each edge $v_iu_j$ of $K$, if $1\le j\le p-1$ and $1\le i\le s$, then we picked a hyperedge $h_{i,j}$ containing $u_i$ and $v_j$. If $i=p$, we pick the hyperedge containing $u_p$ and $v_i$ assured by the complete bipartite subgraph of $G$ on $S$ and $T^1$. We claim that the hypergraph formed by these hyperedges is a $K_{s,p}^{+(a+b)}$, a contradiction. Indeed, if $i=i'$ or $j=j'$, then, by linearity of $\cH$, $h_{i,j}\cap h_{i',j'}$ consists of $u_i$ or $v_j$. Without loss of generality, we can assume $j<j'$. But then $T^{j+1}\supseteq T^{j'}$ was chosen such that the hyperedge $h_{i,j}$ do not share any vertex with any hyperedge that contains a vertex from $S$ and a vertex from $T^{j+1}$ such as $h_{i',j'}$.
\end{proof}

\end{document}